    \numberwithin{equation}{section}
         \bmdefine\alphab{\mathbf{\alpha}}
\bmdefine\betab{\mathbf{\beta}}
\bmdefine\sigmab{\mathbf{\sigma}}
\newcommand{\comment}[1]{}
\newcommand{\eq}{\begin{equation}}
\newcommand{\en}{\end{equation}}
\theoremstyle{plain}
\newtheorem{thm}{Theorem}
\newtheorem{lemma}[thm]{Lemma}
\newtheorem{prop}[thm]{Proposition}
\newtheorem{cor}[thm]{Corollary}
\theoremstyle{definition}
\newtheorem{prb}{Problem}
\newtheorem{rmk}{Remark}
\newtheorem{conj}{Conjecture}
\newcommand{\bp}{\mathbf{p}}
\newcommand{\bq}{\mathbf{q}}
\newcommand{\bX}{\mathbf{X}}
\newcommand{\bM}{\mathbf{M}}
\newcommand{\FF}{\mbox{${\mathcal F}$}} 
\newcommand{\Ex}{{\mathbb E}}
\renewcommand{\Pr}{\mathbb{P}}
\newcommand{\dist}{\mathrm{dist}}
\newcommand{\Geometric}{\mathrm{Geometric}}
\newcommand{\bzero}{\mathbf{0}}
\newcommand{\sfrac}[2]{{\textstyle\frac{#1}{#2}}}
\newcommand{\ed}{\ \stackrel{d}{=} \ }
\DeclareMathOperator{\var}{var }
\DeclareMathOperator{\rank}{rank }
\begin{document}

\title[Fluctuations of Martingales]{Fluctuations of Martingales and Winning Probabilities of Game Contestants}
\author{David Aldous}
\address{University of California, Berkeley, CA 94720-3860}
\email{aldous@stat.berkeley.edu} 
\thanks{Aldous's research supported by N.S.F Grant DMS-0704159.}
\author{Mykhaylo Shkolnikov}
\address{University of California, Berkeley, CA 94720-3860}
\email{mshkolni@gmail.com}
\date{\today}
\keywords{Entrance boundary, fluctuations, martingale; upcrossing; Wright-Fisher diffusion}
\subjclass[2000]{Primary: 60G44; Secondary: 91A60} 
\begin{abstract}
Within a contest there is some probability $M_i(t)$ that contestant $i$ will be the winner, given information available at time $t$, and $M_i(t)$ must be a martingale in $t$.  Assume continuous paths, to capture the idea that relevant information is acquired slowly.
Provided each contestant's initial winning probability is at most $b$, one can easily calculate, without needing further model specification, the
expectations of the random variables $N_b = $ number of contestants whose winning probability ever exceeds $b$, and $D_{ab} = $ total number of downcrossings of the martingales over an interval $[a,b]$. The distributions of $N_b$ and $D_{ab}$ do depend on further model details, and we study how concentrated or spread out the distributions can be. The extremal models for $N_b$ correspond to two contrasting intuitively natural methods for determining a winner: progressively shorten a list of remaining candidates, or sequentially examine candidates to be declared winner or eliminated. 
We give less precise bounds on the variability of $D_{ab}$. We formalize the setting of infinitely many contestants each with infinitesimally small chance of winning, in which the explicit results are more elegant. A canonical process in this setting is the  Wright-Fisher diffusion associated with an infinite population of initially distinct alleles; we show how this process fits our setting and raise the problem of finding the distributions of $N_b$ and $D_{ab}$ for this process.
\end{abstract}

\maketitle

\section{Introduction}

Given a probability distribution 
$\bp = (p_i, i \geq 1)$ consider a collection of processes 
$(M_i(t), 0 \le t < \infty, i \ge 1)$ adapted to a filtration $(\FF_t)$ 
and satisfying \\
(i) $M_i(0) = p_i, i \ge 1$; \\
(ii) for each $t > 0$ we have 
$0 \le M_i(t) \le 1 \ \forall i$ and $\sum_i M_i(t) = 1$; \\
(iii) for each $i \ge 1$, 
$(M_i(t), t \ge 0)$ is a continuous path martingale;\\
(iv) there exists a random time $T < \infty$ a.s. such that, for some random $I$, 
$M_I(T) = 1$ and $M_j(T) = 0 \ \forall j \ne I$.

\medskip\noindent
Call such a collection a {\em $\bp$-feasible} process, and call the $M_i(\cdot)$ its {\em component martingales}. 
To motivate this definition, consider  contestants in a  contest which will have one winner at some random future time.  Then the probability $M_i(t)$ that contestant $i$ will be the winner, given information known at time $t$, must be a martingale as $t$ increases.  In this scenario all the assumptions will hold automatically except for path-continuity, which expresses the idea that
 information becomes known slowly.  

In view of the fact that continuous-path martingales have long been a central concept in mathematical probability, it seems curious that this particular 
``contest" setting  has apparently not previously  been studied systematically.  
Moreover the topic is appealing at the expository level because it can be treated at any technical level.
In an accompanying non-technical article  for undergraduates \cite{me-monthly} we show data on probabilities (from the Intrade prediction market) 
for candidates for the 2012 Republican U.S. Presidential Nomination.  The data is observed values of the variables $N_b$ and $D_{ab}$ below, and 
one can examine the question of whether there was an unusually large number of candidates that year whose fortunes rose and fell substantially.
In this paper, the proof in section \ref{sec:Nb} of 
distributional bounds on $N_b$ is mostly accessible to a student taking a first course in 
continuous-time martingales, and  subsequent sections slowly become more technically sophisticated.

The starting point for this paper is the observation that there are certain random variables 
associated with a $\bp$-feasible process whose {\em expectations} do not depend on the actual joint distribution of the component martingales, and indeed depend very little on $\bp$. 
For $0<a<b<1$ consider
\[ N_b := \mbox{ number of $i$ such that } \sup_t M_i(t) \ge b \]
\[ D_{a,b}:= \mbox{ sum over $i$ of the number of downcrossings of 
$M_i(\cdot)$ over $[a,b]$.} \]
Straightforward uses of the optional sampling theorem 
(described verbally in \cite{me-monthly} as gambling strategies) establish
\begin{lemma}
\label{L1}
If $\max_i p_i \le b$ then for any $\bp$-feasible process,
\[ \Ex [N_b] = 1/b, \quad \Ex [D_{a,b}] = (1-b)/(b-a) . \]
\end{lemma}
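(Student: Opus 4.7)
The plan is to reduce both statements to optional sampling applied to each component martingale $M_i$ separately, exploiting path continuity to pin down the exact value of $M_i$ at the relevant hitting times, and then to sum over $i$ using $\sum_i p_i = 1$. The hypothesis $\max_i p_i \le b$ is used so that $M_i(0) = p_i$ always sits below the level $b$.

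For $\Ex[N_b] = 1/b$, I would introduce the hitting time $\tau_i := \inf\{t \ge 0 : M_i(t) \ge b\}$ (with $\tau_i = \infty$ if no such $t$ exists). Path continuity and $p_i \le b$ force $M_i(\tau_i) = b$ on $\{\tau_i < \infty\}$; on $\{\tau_i = \infty\}$ property (iv) combined with continuity forces $M_i(T) = 0$, since otherwise a path from $p_i$ to the value $1$ would have to cross $b$. Optional sampling applied to the bounded martingale $M_i$ at $\tau_i \wedge n$, followed by $n \to \infty$ and bounded convergence, yields $p_i = \Ex[M_i(\tau_i)] = b \cdot \Pr(\tau_i < \infty)$. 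Since $N_b = \sum_i \mathbf{1}_{\tau_i < \infty}$, summing gives $\Ex[N_b] = \sum_i p_i/b = 1/b$.

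For $\Ex[D_{a,b}] = (1-b)/(b-a)$, the plan is to set up alternating hitting times $\sigma_1^i := \tau_i$, $\tau_k^i := \inf\{t > \sigma_k^i : M_i(t) \le a\}$, and $\sigma_{k+1}^i := \inf\{t > \tau_k^i : M_i(t) \ge b\}$, so that $U_i := \sum_{k \ge 1} \mathbf{1}_{\tau_k^i < \infty}$ is the number of downcrossings by $M_i$. The key step is to compute the one-step conditional probabilities by applying optional sampling over each excursion between consecutive hitting times. On $\{\sigma_k^i < \infty\}$, the martingale restarts at $b$ and must eventually exit the interval $(a,1)$ — either by hitting $a$ (that is $\tau_k^i$) or by being absorbed at $1$ — because property (iv) forces the path to reach $\{0,1\}$ by time $T$, and continuity then guarantees passage through $a$ if the path ever drops to $0$. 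Optional sampling at this exit time gives $\Pr(\tau_k^i < \infty \mid \sigma_k^i < \infty) = (1-b)/(1-a)$. The symmetric argument on the interval $(0,b)$ gives $\Pr(\sigma_{k+1}^i < \infty \mid \tau_k^i < \infty) = a/b$.

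Combining these with $\Pr(\sigma_1^i < \infty) = p_i/b$ from the first part, I expect $\Pr(\tau_k^i < \infty) = (p_i/b)\cdot((1-b)/(1-a)) \cdot r^{k-1}$ with $r = a(1-b)/(b(1-a))$; a short calculation using $1-r = (b-a)/(b(1-a))$ then gives $\Ex[U_i] = p_i(1-b)/(b-a)$, and summing over $i$ completes the proof. The main obstacle is not computational but conceptual: one must verify that the auxiliary exit times from $(a,1)$ and $(0,b)$ are a.s.\ finite and that passage to $0$ or $1$ happens exactly through the levels $a$ or $b$ — both follow cleanly from properties (iii) and (iv) together with path continuity, but they are the places where the assumptions are used in an essential way.
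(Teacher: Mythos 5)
Your proposal is correct and is exactly the argument the paper has in mind: the paper only sketches Lemma \ref{L1} as ``straightforward uses of the optional sampling theorem,'' relying on the fair game formula (\ref{fair-game}) and the downcrossing identities (\ref{eq-3})--(\ref{eq-5}), which are precisely the one-step hitting probabilities $p_i/b$, $(1-b)/(1-a)$ and $a/b$ that you derive and sum over $i$. The points you flag (absorption at $\{0,1\}$ forcing the exit times to be a.s.\ finite, and continuity forcing passage through the levels $a$ and $b$) are the right places where hypotheses (iii), (iv) and $\max_i p_i\le b$ enter.
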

In contrast, the {\em distributions} of $N_b$ and $D_{a,b}$ will depend on the joint distributions of the component martingales, and one goal of this paper is to  study the extremal possibilities.  
Here is our result for $N_b$.
\begin{prop}
\label{P1}
(a)  If $\max_i p_i \le b$ then there exists a $\bp$-feasible process for which 
the distribution of $N^\bp_b$ is supported on the integers 
$\lfloor 1/b \rfloor$ and $\lceil 1/b \rceil$ bracketing its mean $1/b$. \\
(b) There exists a family, that is a $\bp$-feasible process for each $\bp$, such that 
the distributions of $N^\bp_b$ satisfy
\begin{equation}
 \dist(N^\bp_b) \to \Geometric(b) \mbox{ as } \max_i p_i \to 0 . 
\label{NG}
\end{equation}
(c) Any possible  limit distribution for $N^\bp_b$ as 
$\max_i p_i \to 0$ has variance at most 
$(1-b)/b^2$, the variance of $\Geometric(b)$.
\end{prop}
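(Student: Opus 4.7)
The plan is to treat the three parts separately; part (c) is the technical core, while (a) and (b) call for explicit constructions.

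For part (a), I would construct a hierarchical ``tournament'' process in which candidates are progressively grouped into super-candidates whose masses are driven, via internal continuous-martingale dynamics, towards the threshold $b$. Specifically, one arranges $\lfloor 1/b\rfloor$ ``full'' groups of total mass $b$ and at most one ``residual'' group of mass $\beta b$ where $\beta = 1/b - \lfloor 1/b\rfloor$, with each full group's dynamics confined to its own mass interval so that only a single block-winner per group crosses $b$; the residual group's block-winner reaches $b$ with probability $\beta$ by optional stopping. A final Wright--Fisher competition among the (now at most $\lceil 1/b\rceil$) candidates at level $b$ produces the overall winner. By construction $N_b = \lfloor 1/b\rfloor$ with probability $1-\beta$ and $\lceil 1/b\rceil$ with probability $\beta$, and the mean matches $1/b$.

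For part (b), I would use ``sequential examination''. Fix an ordering of the candidates; in round $k$ let $M_k$ evolve from its current value as a continuous martingale first to $\{0,b\}$ and, if it hits $b$, then to $\{0,1\}$, with all other $M_i$'s rescaled proportionally so that $\sum_i M_i \equiv 1$. Optional stopping shows that, conditional on reaching round $k$, $M_k$ crosses $b$ with probability equal to its current value divided by $b$, and given such a crossing wins with probability exactly $b$. Thus each crossing of $b$ is an independent Bernoulli$(b)$ opportunity to win, and $N_b$ equals the number of crossings up to and including the first win. In the limit $\max_i p_i\to 0$ the effective number of available candidates diverges, so $\dist(N_b^{\bp})\to\Geometric(b)$.

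For part (c), the technical core, I would prove the stronger unconditional bound $\var(N_b^{\bp})\le(1-b)/b^2$ for every $\bp$-feasible process; uniform $L^2$-boundedness of the family $\{N_b^{\bp}\}$ together with Fatou's lemma then transfers the bound to any weak limit. Set $\tau_i=\inf\{t:M_i(t)\ge b\}$ and split ordered pairs by which crossing is earlier:
\begin{equation*}
 \Ex[N_b(N_b-1)] \;=\; 2\sum_{i\ne j}\Pr(\tau_i<\tau_j<\infty).
\end{equation*}
On $\{\tau_i<\tau_j\}$ one has $M_j(\tau_i)<b$; optional stopping applied to the bounded continuous martingale $M_j$ at its first exit from $[0,b]$ after time $\tau_i$ gives $\Pr(\tau_j<\infty\mid\mathcal{F}_{\tau_i})=M_j(\tau_i)/b$, so
\begin{equation*}
 \Ex[N_b(N_b-1)] \;=\; \tfrac{2}{b}\,\Ex\Big[\sum_i \mathbf{1}[\tau_i<\infty]\sum_{j\ne i,\,\tau_j>\tau_i} M_j(\tau_i)\Big].
\end{equation*}
Since $M_i(\tau_i)=b$ and $\sum_\ell M_\ell(\tau_i)=1$, the inner sum equals $(1-b)-\sum_{j\ne i:\,\tau_j\le\tau_i} M_j(\tau_i)\le 1-b$; combining with $\sum_i\Pr(\tau_i<\infty)=1/b$ from Lemma~\ref{L1} gives $\Ex[N_b(N_b-1)]\le 2(1-b)/b^2$, hence $\var(N_b)\le (1-b)/b^2$. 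The bound is asymptotically tight for the sequential construction in (b), where once a candidate crosses $b$ it is resolved (reaching $1$ or returning to $0$) before any other $\tau_j$ fires, causing the dropped non-negative term to vanish.
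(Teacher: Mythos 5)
Your parts (b) and (c) are essentially the paper's own arguments. For (c) the paper conditions at the time $S_k$ of the $k$-th crossing of $b$ and sums over $k$, obtaining $\Ex[(N_b-k)_+]\le \frac{1-b}{b}\Pr(N_b\ge k)$; your version conditions at each $\tau_i$ and sums over ordered pairs. These are the same computation in different coordinates: both reduce to $\Ex[N_b(N_b-1)]\le \frac{2(1-b)}{b}\,\Ex[N_b]$ via optional stopping ($\Pr(\tau_j<\infty\mid\FF_{\tau_i})=M_j(\tau_i)/b$) together with $\sum_\ell M_\ell\equiv 1$, and both identify the equality case (all previously crossed components already at $0$), which the paper records as Corollary \ref{C-b}. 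Note only that your claimed ``unconditional'' bound needs $\max_i p_i\le b$ so that $\sum_i\Pr(\tau_i<\infty)=1/b$ from Lemma \ref{L1}; this is harmless in the regime $\max_i p_i\to 0$. Your (b) is the paper's ``tied'' sequential-examination construction; the paper makes the limit precise by truncating at the first index $J$ with $p_{J+1}/(1-p_1-\cdots-p_J)>b$, coupling $N_b$ with a dominating $\Geometric(b)$ variable (Lemma \ref{Lb}), and bounding the discrepancy by $2b^{-1}\sum_{j>J}p_j$; your ``effective number of candidates diverges'' is the right intuition but that truncation-and-coupling step is what actually closes the argument.

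Part (a) as written has a genuine gap: you cannot in general ``arrange $\lfloor 1/b\rfloor$ full groups of total mass $b$'' from the given atoms. Take $b=1/2$ and $\bp=(0.4,0.4,0.2)$: no subset of these masses sums to $1/2$, so the static partition into groups of mass exactly $b$ (plus a residual of mass $\beta b$) does not exist, and the subsequent ``one block-winner per group reaches $b$'' argument never gets off the ground. The repair is to create the configuration dynamically rather than combinatorially, which is what the paper does: run the Wright--Fisher process from $\bp$ until some component first hits $b$, freeze it there, run the remaining components as a rescaled Wright--Fisher process until another hits $b$, and iterate; since $\sum_i M_i\equiv 1$ this necessarily terminates with exactly $\lfloor 1/b\rfloor$ components frozen at $b$ and one residual component at $1-b\lfloor 1/b\rfloor$, after which one unfreezes and runs to fixation. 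Your final step (the residual component, at value $\beta b$, reaches $b$ with probability $\beta$ by optional stopping) is correct once this configuration has been reached.
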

Clearly the distribution in (a) is the ``most concentrated" possible, 
and part (c) gives a sense in which 
 the $\Geometric(b)$ distribution  is the ``most spread out" distribution possible. 
The proof will be given in section  \ref{sec:Nb}.  
The construction for (a) formalizes the idea that we maintain a list of 
candidates still under consideration, and at each stage choose one 
candidate to be eliminated. 
The construction for (b) formalizes the idea that we examine candidates sequentially, 
deciding  to declare the current candidate to be the winner or  to be eliminated.  
 Returning briefly to the theme that this topic is amenable to popular exposition, with some imagination one can relate these two alternate ideas to those used in season-long television shows.
Shows like {\em Survivor} overtly follow the idea for (a), whereas the idea for (b) would correspond to a variant of 
{\em \ldots \ldots Millionaire} in which contestants were required to
 try for the million dollar prize and where the season ends when the 
prize is won.

We give an analysis of downcrossings $D_{ab}$ in section \ref{sec:down}, though 
with less precise results.
The  construction that gave the Geometric limit distribution for $N_b$ in 
(\ref{NG}) also gives a Geometric limit distribution for $D_{ab}$ 
(Proposition \ref{LDab}).  We conjecture this is the maximum-variance possible limit, but can give only a weaker 
bound in Proposition \ref{prop_UBD}.  
As for minimum-variance constructions, Proposition \ref{LBDcrit} shows
one can construct feasible processes for which, in the limit as $b \to 0$ 
with $a/b$ bounded away from $1$,
the variance of $D_{ab}$ is bounded by a constant depending only on 
$a/b$. 
The case $a/b \approx 1$ remains mysterious, but prompts novel open problems 
about negative correlations for Brownian local times -- see section \ref{sec:OPext}.

\subsection{$\bzero$-feasible processes}
\label{sec:0-int}
As a second goal of this paper, it seems intuitively clear that the concept 
of $\bp$-feasible process can be taken to the limit as $\max_i p_i \to 0$, 
to represent the idea of starting with an infinite number of  contestants each with only infinitesimal chance of winning.
Informally, we define a $\bzero$-feasible process as a process with the properties:

(i) for each $t_0 > 0$, conditional on $M_i(t_0) = p_i, i \geq 1$, the process 
$(M_i(t_0+t),  0 \le t < \infty, i \ge 1)$ is a $\bp$-feasible process;

(ii) $\sup_i M_i(t) \to 0 \mbox{ a.s.  as } t \downarrow 0$.

\noindent
There is some subtlety in devising a precise definition, 
which we will give in section \ref{sec:zero}.  Once this is done 
we can deduce results for general $\bzero$-feasible processes as 
 limits of results for $\bp$-feasible processes under the regime 
 $\max_i p_i \to 0$, and also we can construct specific $\bzero$-feasible processes by splicing together specific $\bp$-feasible processes under the same regime (Proposition \ref{Prop:consistent}).

By eliminating any dependence on $\bp$,  results often 
become cleaner for $\bzero$-feasible processes. 
For instance Proposition \ref{P1} becomes
\begin{cor}
\label{C1}
(a)  There exists a $\bzero$-feasible process such that, for each $0<b<1$, 
the distribution  $N_b$ is supported on the integers 
$\lfloor 1/b \rfloor$ and $\lceil 1/b \rceil$ bracketing its mean $1/b$. \\
(b) Given $0<b_0<1$, there exists a  $\bzero$-feasible process  such that,  for each $b_0 \le b<1$, 
$N_b$ has $\Geometric(b)$ distribution. \\
(c) Moreover for any $\bzero$-feasible process and any $0<b<1$ the variance of $N_b$ 
is at most $(1-b)/b^2$, the variance of $\Geometric(b)$.
\end{cor}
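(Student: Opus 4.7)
The plan is to reduce each part of the corollary to the corresponding part of Proposition~\ref{P1} by passing to the limit $t_0\downarrow 0$. For each $t_0>0$ the shifted process $(M_i(t_0+s))_{s\ge 0}$, conditioned on $\bp_{t_0}:=(M_i(t_0))_{i\ge 1}$, is a $\bp_{t_0}$-feasible process by definition of $\bzero$-feasibility, and property~(ii) forces $\max_i(\bp_{t_0})_i\to 0$ a.s.\ as $t_0\downarrow 0$. The key technical observation is that $N_b^{(t_0)}:=\#\{i:\sup_{s\ge t_0}M_i(s)\ge b\}$ satisfies $N_b^{(t_0)}\uparrow N_b$ almost surely: by path-continuity together with $M_i(s)\to 0$ as $s\downarrow 0$, any $i$ contributing to $N_b$ crosses the level $b$ at a first time $s_i^*>0$ and hence contributes to $N_b^{(t_0)}$ for every $t_0\le s_i^*$.

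For parts (a) and (b), I would invoke Proposition~\ref{Prop:consistent} to splice the families of $\bp$-feasible processes from Proposition~\ref{P1}(a) and (b) into $\bzero$-feasible processes such that, for every $t_0>0$, conditional on $\bp_{t_0}$ the shifted process has the prescribed law. For (a), once $\max_i(\bp_{t_0})_i\le b$ the variable $N_b^{(t_0)}$ takes values only in $\{\lfloor 1/b\rfloor,\lceil 1/b\rceil\}$, so its integer-valued monotone a.s.\ limit $N_b$ does as well. For (b), the conditional convergence $\dist(N_b^{(t_0)}\mid\bp_{t_0})\to\Geometric(b)$ supplied by Proposition~\ref{P1}(b) gives, by dominated convergence against bounded test functions, the unconditional convergence $\dist(N_b^{(t_0)})\to\Geometric(b)$; combined with the a.s.\ convergence $N_b^{(t_0)}\to N_b$, this identifies $\dist(N_b)=\Geometric(b)$ for each $b\ge b_0$.

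Part (c) is the delicate one. Lemma~\ref{L1} gives $\Ex[N_b^{(t_0)}\mid\bp_{t_0}]=1/b$ as soon as $\max_i(\bp_{t_0})_i\le b$, so by the law of total variance $\var(N_b^{(t_0)})=\Ex[\var(N_b^{(t_0)}\mid\bp_{t_0})]$. Monotone convergence (valid once we have a uniform-in-$t_0$ bound on $\Ex[(N_b^{(t_0)})^2]$, which drops out of the very bound we are proving) then yields $\var(N_b^{(t_0)})\to\var(N_b)$, so the claim reduces to showing $\limsup_{t_0\downarrow 0}\Ex[\var(N_b^{(t_0)}\mid\bp_{t_0})]\le(1-b)/b^2$. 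The main obstacle is that Proposition~\ref{P1}(c) only bounds the variances of limit distributions of $N^\bp_b$ along deterministic sequences with $\max_i p_i\to 0$; to bridge this, I would either extract from its proof the uniform statement $\sup\{\var(N^\bp_b):\max_i p_i\le\delta\}\le(1-b)/b^2+o(1)$ as $\delta\downarrow 0$ (which should be available if that proof proceeds via a direct second-moment computation), or argue by tightness of $\{\dist(N_b^{(t_0)})\}_{t_0>0}$ together with the observation that a mixture of common-mean distributions has variance equal to the average of the component variances, combined with a.s.\ identification of the weak limit as $\dist(N_b)$.
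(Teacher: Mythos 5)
Your treatment of part (c) is sound and is essentially the intended argument: the paper's one-line proof hides exactly the reduction you describe, and you are right that the statement of Proposition \ref{P1}(c) (about limit distributions) is not literally enough --- one needs the uniform bound $\var(N^\bp_b)\le (1-b)/b^2$ whenever $\max_i p_i\le b$, which, as you guessed, is what the second-moment computation in that proof actually delivers (it bounds $\Ex[N_b(N_b-1)]$ directly for any such $\bp$-feasible process). Combined with the law of total variance at small $t_0$ (the conditional mean being the constant $1/b$ on the event $\{\max_\alpha M_\alpha(t_0)\le b\}$, whose probability tends to $1$) and your monotone limit $N_b^{(t_0)}\uparrow N_b$, this gives (c).

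The gap is in parts (a) and (b), where you treat Proposition \ref{Prop:consistent} as a device that can splice ``the families from Proposition \ref{P1}(a) and (b)'' into a $\bzero$-feasible process. Its hypothesis (iii) --- that running the $\mu_k$-process to a stopping time $T_k$ produces a ranked configuration distributed exactly as $\mu_{k-1}$ --- is the real content, and neither the Proposition \ref{P1}(a) nor the Proposition \ref{P1}(b) construction satisfies it as given. Worse, for (a) the construction of Proposition \ref{P1}(a) fixes $b$ at the outset, so there is no single family whose conditional shifted law ``has the prescribed law'' for every $b$ simultaneously, yet the corollary demands one process working for all $b$ at once. The paper resolves (a) with a genuinely new construction (the \emph{Survivor} process of Remark \ref{R:survivor}): stage $m$ carries $m$ components at $1/m$ to $m-1$ components at $1/(m-1)$; these stages are consistent by design and are exactly what feed Proposition \ref{Prop:consistent}; the two-point support is then proved for $b=1/M$ and extended to all $b$ by monotonicity of $N_b$ in $b$. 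For (b) no limiting argument is needed at all: with the geometric initial vector (\ref{b0}) the Proposition \ref{P1}(b) construction gives $N_b$ \emph{exactly} $\Geometric(b)$ for every $b\ge b_0$ (one has $J=\infty$), and Proposition \ref{P-embed} supplies a $\bzero$-feasible prefix reaching that single configuration at one stopping time --- which is all the definition requires. Demanding the prescribed conditional law at every $t_0$ (or even along a sequence $t_0\downarrow 0$), as your plan does, is both unnecessary and unachievable for the sequential-examination construction, since it is not consistent under its own evolution.
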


Setting aside the ``extremal" questions we have discussed so far, 
another motivation for considering the class of $\bzero$-feasible processes
is that there is one particular such process which we regard intuitively 
as the ``canonical" choice, and this is the 
{\em $\bzero$-Wright-Fisher process} 
discussed  in section \ref{sec:0WF}.  
This connection between (a corner of) the large literature on processes inspired by population genetics and our game contest setting seems not to have been developed before.
In particular, questions about the fluctuation behavior of the 
$\bzero$-Wright-Fisher process -- the distributions of 
$N_b$ and $D_{ab}$ -- arise more naturally in the contest setting, 
though it seems hard to get quantitative estimates of these distributions.

\section{Preliminary observations}
\label{sec:back}
\subsection{The downcrossing formula}
In our setting of a continuous-path martingale $M(\cdot)$ ultimately stopped at $0$ or $1$, 
recall the ``fair game  formula"
\begin{equation}
 \Pr (M(t) \mbox{ hits $b$ before $a$ } \vert M(0) = x) = 
\sfrac{x-a}{b-a}, \ 0 \le a \le x \le b \le 1 
\label{fair-game}
\end{equation}
from which one can readily derive the well known formula for the expectation of the number $D$ of downcrossings of $M(\cdot)$ over $[a,b]$:
 for $0 \le a \le b \le 1$,
\begin{eqnarray}
 \Ex [D \vert M(0) = x] &=& \sfrac{x(1-b)}{b-a} \quad \mbox{ if } 0 \le x \le b \label{eq-3}\\
                                   &=& \sfrac{b(1-x)}{b-a} \quad \mbox{ if } b \le x \le 1 \label{eq-4}.
\end{eqnarray}
Moreover, starting from $b$ there is a modified Geometric distribution 
for $D$:
\begin{eqnarray}
\Pr(D = 0 | M(0) = b) &=& \sfrac{b-a}{1-a} \nonumber\\
\Pr(D = d | M(0) = b) &=& \sfrac{1-b}{1-a} 
\ \left(\sfrac{a(1-b)}{b(1-a)} \right)^{d-1} \ 
\left( 1 - \sfrac{a(1-b)}{b(1-a)} \right) , \quad d \ge 1 .\label{eq-5}
\end{eqnarray}

\subsection{The multivariate Wright-Fisher diffusion}
\label{sec:multi}
Textbooks introducing discrete time martingales often use as an example (e.g. \cite{lange-text} Example 10.2.6) the discrete-time 
Wright-Fisher model for genetic drift of a single allele. Note that throughout what follows, we consider only the case of no mutation and no selection.
It is classical that the infinite population limit of the $k$-allele model 
is the multivariate Wright-Fisher diffusion on the  $k-1$-dimensional simplex, that is with generator 
\begin{equation}
\sfrac{1}{2} \sum_{i,j = 1}^k  x_i(\delta_{ij} - x_j) \frac{\partial^2}{\partial x_i \partial x_j}. 
\label{WF-generator}
\end{equation} 
Each component is a martingale, the one-dimensional diffusion on 
$[0,1]$ with drift rate zero and variance rate $x(1-x)$. 
There has been extensive work since the 1970s on  the
infinitely-many-alleles case, but this has focussed on the case of positive 
mutation rates to novel alleles, in which case the martingale property no longer holds.  In our setting 
(no mutation and no selection) 
it is straightforward to show directly (see section \ref{sec:0WF})
that for any 
$\bp = (p_i, i \geq 1)$ with countable support there exists what we 
will call the $\bp$-Wright-Fisher process, 
the infinite-dimensional diffusion with generator analogous to 
(\ref{WF-generator}) starting from state $\bp$, and that this is a 
$\bp$-feasible process.  
So we know that $\bp$-feasible processes do actually exist,  
and these $\bp$-Wright-Fisher processes will be useful ingredients in later constructions. 
(When $\bp$ has finite support we could use instead Brownian motion on the finite-dimensional simplex, 
whose components are killed at $0$ and $1$, but this does not extend so readily to the 
infinite-dimensional setting).   

It is convenient to adopt from genetics the phrase {\em fixation time} for the time $T$ at which the winner is determined.

\subsection{Constructions using Wright-Fisher}
In a Wright-Fisher diffusion we have 
$\sum_i M_i(t) \equiv 1$, but trivially we can consider a rescaled 
Wright-Fisher diffusion for which $\sum_i M_i(t)$ is a prescribed constant.

Our constructions of feasible processes typically proceed in stages.  
Within a stage we may declare that
some component martingales are ``frozen" 
(held constant) and the others evolve as a rescaled Wright-Fisher process.
In particular if only two component martingales are unfrozen, say at the start $S$ of the stage we have
$M_i(S) = x_i$ and $M_j(S) = x_j$,
 then during the stage we have a
 ``reflection coupling" with $M_i(t) + M_j(t) = x_i +x_j$, and we can choose to continue the stage until the processes reach $x_i+x_j$ and $0$, 
or we can choose to stop earlier. 

An alternative construction method is to select one component martingale $M_i(S)$ at the start of the stage, let $(M_i(\cdot), 1 - M_i(\cdot))$ evolve as the two-allele Wright-Fisher process during the stage, and set $M_j(\cdot) = \frac{M_j(S)}{1-M_i(S)} \times 
(1 - M_i(\cdot))$.  We describe this construction by saying that 
the processes $(M_j(\cdot), j \ne i)$ are {\em tied}.  

Both constructions clearly give continuous-path martingale components.

The results in sections \ref{sec:Nb} and \ref{sec:down} are based on  
concrete calculations and constructions, though  in applying them to $\bzero$-feasible processes
 we ``look ahead" and quote results from later 
(Propositions \ref{Prop:consistent} and \ref{P-embed}) which are designed for this purpose, formalizing the intuitive description from section 
\ref{sec:0-int} so as to allow results to be easily interchanged between $\bp$-feasible and 
$\bzero$-feasible processes.

\section{Proofs of distributional bounds on $N_b$}
\label{sec:Nb}
\begin{proof}[Proof of Proposition \ref{P1}(a)]  
Fix $b$. 
Run a Wright-Fisher process started at $\bp$ until some $M_i(\cdot)$ reaches $b$.  
Freeze that $i$ and run the remaining processes as  rescaled 
Wright-Fisher until some other $M_j(\cdot)$ reaches $b$. 
Freeze that $j$ and continue. 
After a finite number of such stages we must reach a state where all component martingales except one are frozen at $b$ or at $0$, and the remaining one is in $[0,b]$.  Because $\sum_i M_i(t) \equiv 1$  the number frozen at $b$ must be $\lfloor 1/b \rfloor$ and the 
remaining martingale must be at $1 - b \lfloor 1/b \rfloor$. 
Finally, unfreeze and  run from  this configuration to fixation as 
Wright-Fisher.  Clearly $N_b$ takes only the values $\lfloor 1/b \rfloor$ 
and $\lceil 1/b \rceil$. 
\end{proof}

\begin{proof}[Proof of Corollary \ref{C1}(a)] 
This construction is similar to that above, but is closer to our earlier informal description
``maintain a list of 
candidates still under consideration, and at each stage choose one 
candidate to be eliminated".

For each integer $m \ge 2$,
we will define a stage  which starts with $m$ 
component martingales at $1/m$, and ends with $m-1$ of these martingales at $1/(m-1)$ 
and the other frozen at $0$.
To construct this stage,
run as Wright-Fisher until some $M_i(\cdot)$ reaches $1/(m-1)$.  
Freeze that $i$ and run the remaining martingales as  rescaled 
Wright-Fisher until some other $M_j(\cdot)$ reaches $1/(m-1)$. 
Freeze that $j$ and continue. 
Eventually we must reach a state where  $m-1$ martingales  are frozen at $1/(m-1)$  and the remaining process is $0$. 
This stage takes some random time $\tau_m$ with finite expectation; 
without needing to calculate it, we can simply rescale time so that 
$\Ex [ \tau_m] = 2^{-m}$.  

Intuitively, we simply put these stages together, to obtain a 
$\bzero$-feasible process in which, for each $M \ge 1$, at time 
$\sum_{m>M} \tau_m$ there are exactly $M$ martingales at $1/M$. 
Proposition \ref{Prop:consistent} formalizes this construction.
This process satisfies the assertion of the Corollary for each $b = 1/M$, 
and then for general $b$  because $N_b$ is monotone in $b$.
\end{proof}

\begin{rmk}
\label{R:survivor}
Let us call a $\bzero$-feasible process with the property above, 
that for each $M \ge 2$ there is a time at which there are exactly $M$ 
component martingales each at value $1/M$, a {\em Survivor} process.
We placed the proof here to illustrate the technical issue  arising in making precise the construction of such a $\bzero$-feasible process, which 
is to arrange consistent labeling of each component martingale across the different stages.  
The point is that the one out of the $M$ that does not reach $1/(M-1)$ is a uniform random pick, so
we cannot just label them as $1,\ldots,M$ for each $M$. 
\end{rmk}

\begin{proof}[Proof of Proposition \ref{P1}(b)]
  Fix $b$.
Write $\bp$ in ranked order $p_1 \ge p_2 \ge \ldots$, 
and write $J$ for the first term (if any) such that 
$p_{J+1}/(1 - p_1 - \ldots - p_J) > b$.  

We use the ``tied" construction from the start of this section.
Run $(M_1(\cdot), 1-M_1(\cdot))$ as Wright-Fisher started from $(p_1,1-p_1)$ and stopped 
at $S_1 := \min \{t: M_1(t) = 0 \mbox{ or } 1\}$, and for $i \ge 2$ set
\[ M_i(t) = \sfrac{p_i}{1-p_1} (1 - M_1(t)), \ 0 \le t \le S_1 . \] 
So $M_i(\cdot)$ is a martingale on this time interval.
Note that if $J \ne 1$ then no $M_i(\cdot)$ can reach $b$ before time 
$S_1$, for $i \ge 2$. 

If $M_1(S_1) = 1$ the process stops.  
If $M_1(S_1) = 0$ then for $i \ge 2$ we have $M_i(S_1) = p_i/(1-p_1)$.  
For $t \ge S_1$ 
run $(M_2(\cdot),1-M_2(\cdot))$ as Wright-Fisher started from 
$(\frac{p_2}{1-p_1}, \frac{1-p_1-p_2}{1-p_1})$ and stopped 
at $S_2 := \min \{t: M_2(t) = 0 \mbox{ or } 1\}$, and for $i \ge 3$ set
\[ M_i(t) = \sfrac{p_i}{1-p_1- p_2} (1 - M_2(t)), \ S_1 \le t \le S_2 . \]
If $J \ne 2$ then no $M_i(\cdot)$ can reach $b$ before time 
$S_2$, for $i \ge 3$. 

Continue in this way to define processes 
$(M_j(t), \ S_{j-1} \le t \le S_j)$ for $1 \le j \le J$, or until some $M_j(\cdot)$ reaches $1$ and the whole process stops. 
If the process has not stopped by time $S_J$, continue in an arbitrary manner, which makes the resulting process $\mathbf{p}$-feasible. Note that, if $M_j(\cdot)$ reaches $b$, then with probability exactly $1/b$ it will reach $1$, and that with probability $1 - \sum_{j \le J} p_j$ the process has not stopped by time $S_J$.  

Write $N_b^{(J)} = $ 
number of martingales $j \le J$ that reach $b$.
 We can now apply Lemma \ref{Lb} below to $Z = N_b^{(J)}$, and deduce that 
$N_b^{(J)} \le Z^\prime \ed \mathrm{Geometric}(b)$ with $Z^\prime$ constructed in Lemma \ref{Lb}.  
Then
\begin{eqnarray*}
\Pr (N_b \ne Z^\prime) 
&\le& \Ex [ |N_b - Z^\prime| ]\\
&\leq & \Ex [ Z^\prime - N_b^{(J)} ] + \Ex [N_b - N_b^{(J)}] \\
&=& b^{-1}(1 - \sum_{j \le J} p_j) + b^{-1}(1 - \sum_{j \le J} p_j) \\
&=& 2 b^{-1}\sum_{j>J} p_j .
\end{eqnarray*}
Finally, as $\bp$ varies we have
\[ \mbox{ if } \max_i p_i \to 0 \mbox{ then } \sum_{j > J(\bp)} p_j \to 0\]
establishing the limit result (\ref{NG}).
\end{proof}

\begin{lemma}
\label{Lb}
Given $0<b<1$ and probabilities $q_i, 1 \le i \le J$ define a
counting process by: for each $i$, given not yet terminated,

with probability $q_i b$, increment count by $1$ and terminate;

with probability $q_i (1-b)$, increment count by $1$ and continue;

with probability $1 - q_i$, continue.\\
Let $Z$ be the value of the counting process after step $J$ or at time $T$ (the termination time, if any), whichever occurs first. 
Then there exists $Z^\prime \ed \mathrm{Geometric}(b)$ such that 
$Z \le Z^\prime$.
\end{lemma}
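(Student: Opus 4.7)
The plan is to build $Z$ and $Z'$ on a common probability space by decomposing each step's randomness into two independent pieces, and then letting $Z'$ be a natural ``pre-drawn'' Geometric$(b)$ random variable.

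First I would rewrite a single step of the process as two nested coin flips: an independent Bernoulli$(q_i)$ ``increment'' coin $K_i$, and then, conditional on $\{K_i=1\}$, an independent Bernoulli$(b)$ ``termination'' coin. This exactly reproduces the prescribed probabilities $q_ib$, $q_i(1-b)$, $1-q_i$. The point of this split is to separate the (step-dependent) decision of \emph{whether} to increment from the (step-independent) decision of whether an increment is also a termination.

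Next I would draw, once and for all, an i.i.d.\ sequence $W_1, W_2, \ldots$ of Bernoulli$(b)$ random variables, independent of the $K_i$'s, and declare that $W_m$ serves as the termination flip attached to the $m$-th increment of the counting process (so at step $i$ with $K_i=1$, the process terminates iff $W_{m}=1$ where $m$ is the running increment count). Define
\[ Z' := \inf\{m \ge 1 : W_m = 1\}, \]
which is $\mathrm{Geometric}(b)$ and defined without reference to $J$ or the $q_i$'s.

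Finally I would verify $Z \le Z'$ by cases. If the process terminates at some step $i \le J$, then the $Z$-th increment triggered termination, which means $W_Z=1$ and $W_1=\cdots=W_{Z-1}=0$, so $Z=Z'$. If the process has not terminated by step $J$, then every $W_m$ consumed so far equals $0$, so $Z$ (the number of increments in $J$ steps) is strictly less than the first index at which a $W$ equals $1$, giving $Z<Z'$. Either way $Z \le Z'$, as required. There is no real obstacle here; the only care needed is the bookkeeping to confirm that the joint law of the $(K_i, W_{\cdot})$ construction reproduces the one-step transition probabilities in the lemma's statement, which is immediate from the split in the first paragraph.
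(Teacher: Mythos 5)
Your proof is correct and is essentially the same coupling as the paper's: the paper simply augments the process by setting $q_i=1$ for $i>J$ and lets $Z'$ be the count at the (now a.s.\ finite) termination time, which under the natural coupling is exactly your $\inf\{m: W_m=1\}$ built from the pre-drawn termination coins. Your version just makes the underlying independent Bernoulli$(b)$ sequence explicit.
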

\begin{proof} 
Augment the process by setting $q_i = 1, \ i > J$ and follow the algorithm for all $i\geq1$. The process must now terminate at some a.s. finite time $T^\prime$, at which time the value $Z^\prime$ of the counting process has exactly 
Geometric($b$) distribution.   
\end{proof}

\begin{proof}[Proof of Proposition \ref{P1}(c)]
Fix $b$ and for $k \ge 1$ let $S_k \le \infty$ be the first time at which 
$k$ distinct component martingales have reached $b$.  If  $N_b \ge k$, 
then at time $S_k$ one martingale takes value $b$, the other $k-1$ that previously reached $b$ take some values $Z_1,\ldots, Z_{k-1}$, and the remaining martingales take some 
values $M_j(S_k)< b$.  The chance that such a remaining martingale subsequently reaches $b$ equals $M_j(S_k)/b$, and so,  
on $\{N_b \ge k\}$, 
\begin{equation}
 \Ex [N_b - k \vert \FF_{S_k}] = b^{-1} \sum_j M_j(S_k) 
= b^{-1} \left(1 - b - \sum_{j=1}^{k-1} Z_j \right) 
\le \frac{1-b}{b} . 
 \label{NZ}
\end{equation}
So 
\[ \Ex [  (N_b - k)_+] \le  \sfrac{1-b}{b} \  \Pr (N_b \ge k) \]
and summing over $k \ge 1$ gives
\[ \Ex \left[ \sfrac{N_b(N_b - 1)}{2}  \right] \le \sfrac{1-b}{b} \Ex [N_b] = \sfrac{1-b}{b^2} .\]
Finally, 
\[ \var ( N_b) = 2 \Ex [ \sfrac{N_b(N_b - 1)}{2} ] + \Ex [N_b] - (\Ex N_b)^2 \le \sfrac{1-b}{b^2} .\]
\end{proof}

For later use (section \ref{sec:zero}) note that to have equality in the final display above we need equality in (\ref{NZ}), implying each $Z_j = 0$, that is the martingale components that previously reached $b$ have all reached zero.  We deduce 
\begin{cor}
\label{C-b}
If, for a $\bp$-feasible process, $N_b$ has Geometric($b$) distribution, then there is no time at which more than one component martingale is in $[b,1]$.
\end{cor}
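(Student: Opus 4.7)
The plan is to revisit the chain of inequalities in the proof of Proposition~\ref{P1}(c) and exploit the fact that the Geometric hypothesis forces each intermediate estimate to be an equality almost surely. Since $\Geometric(b)$ has mean $1/b$ and variance $(1-b)/b^2$, the assumption $N_b \ed \Geometric(b)$ makes the variance bound $\var(N_b) \leq (1-b)/b^2$ tight, so every inequality used in its derivation must be an equality a.s.

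Tracing back, the only inequality invoked is the pointwise bound $\Ex[N_b - k \mid \FF_{S_k}] = b^{-1}(1 - b - \sum_{j=1}^{k-1} Z_j) \leq (1-b)/b$ on $\{N_b \geq k\}$, for each $k \geq 1$. Equality therefore forces $\sum_{j=1}^{k-1} Z_j = 0$ a.s.\ on $\{N_b \geq k\}$, and since each $Z_j \geq 0$ this gives $Z_j = 0$ for every $j < k$ on that event. In words, at each stopping time $S_k$ with $k \leq N_b$, the $k-1$ component martingales that previously reached $b$ are already at $0$. Since any nonnegative continuous martingale is absorbed at $0$ (if $M(s) = 0$ then $\Ex[M(t) \mid \FF_s] = 0$ combined with $M \geq 0$ forces $M(t) = 0$ a.s.\ for all $t \geq s$), the component that first reached $b$ at time $S_j$ is identically $0$ on $[S_{j+1}, \infty)$, for every $j < N_b$.

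Finally, I would fix an arbitrary $t \geq 0$ and let $k(t)$ denote the number of indices $j$ with $S_j \leq t$, so $0 \leq k(t) \leq N_b$. Any component $M_i$ satisfying $M_i(t) \in [b,1]$ has $\sup_{s \leq t} M_i(s) \geq b$, hence its first passage to $b$ must coincide with some $S_j$ with $j \leq k(t)$. For $j < k(t)$, the previous step shows that this martingale is absorbed at $0$ by time $S_{k(t)} \leq t$, giving $M_i(t) = 0$ and contradicting $M_i(t) \geq b$. Only the component that first reaches $b$ at $S_{k(t)}$ (when $k(t) \geq 1$) can lie in $[b,1]$ at time $t$, yielding at most one such component and proving the claim. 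The one subtle point is the pathwise bookkeeping across the stopping times $S_k$, which is dispatched cleanly by $0$-absorption of nonnegative martingales.
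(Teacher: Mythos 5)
Your proof is correct and follows essentially the same route as the paper: the paper deduces the corollary by observing that equality in the variance bound of Proposition \ref{P1}(c) forces equality in (\ref{NZ}), hence each $Z_j=0$, i.e.\ the components that previously reached $b$ have all hit zero. Your only addition is to spell out the absorption-at-zero step for nonnegative continuous martingales and the bookkeeping at a general time $t$, which the paper leaves implicit.
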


\begin{proof}[Proof of Corollary \ref{C1}(c)] 
This follows from Proposition \ref{P1}(c) and the definition (section \ref{sec:zero}) of $\bzero$-feasible process via embedded 
$\bp$-feasible processes. 
\end{proof}

\begin{proof}[Proof of Corollary \ref{C1}(b)]
Given $b_0$, consider the vector $\bp$ of Geometric probabilities with 
\begin{equation}
p_i = b_0(1-b_0)^{i-1}, i \ge 1.
\label{b0}
\end{equation}
The construction in the proof of Proposition \ref{P1}(b) and its analysis show that for this 
$\bp$-feasible process and any $b \ge b_0$ we have $J = \infty$ and that $N_b$ has $\mathrm{Geometric}(b)$ distribution.  
So it is enough to show that there exists a $\bzero$-feasible process and a stopping time at which the values of the component martingales are 
$\bp$.  But Proposition \ref{P-embed} shows this is true for every $\bp$.
\end{proof}

\section{Distributional bounds on downcrossings}
\label{sec:down}
\subsection{The large spread setting}
\label{sec:large_spread}
\begin{prop}
\label{LDab}
Given $b_0>0$, there exists a  $\bzero$-feasible process  such that 
$D_{ab} + 1$ has $\Geometric(\frac{b-a}{1-a})$ distribution, for each $b_0 \le b < 1$ and $0<a<b$.  
\end{prop}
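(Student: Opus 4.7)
The plan is to reuse the $\bp$-feasible construction from the proof of Proposition \ref{P1}(b) with the Geometric weights $p_i = b_0(1-b_0)^{i-1}$ that already appeared in the proof of Corollary \ref{C1}(b), and then promote it to a $\bzero$-feasible process via Proposition \ref{P-embed}. The critical feature of these weights is that $p_j/(1 - p_1 - \cdots - p_{j-1}) \equiv b_0$, so in the ``tied'' construction each component martingale, once it becomes the active one, starts from $b_0$, evolves as a one-dimensional Wright-Fisher diffusion on $[0,1]$ until it hits $\{0,1\}$, and if it hits $0$ a new component takes over, starting again from $b_0$. Thus the global process is a concatenation of i.i.d.\ Wright-Fisher excursions from $b_0$, stopped the first time one is absorbed at $1$.

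The proposition will follow from the identity
\[ \Pr(D_{ab} \ge d+1 \mid D_{ab} \ge d) = \frac{1-b}{1-a}, \quad d \ge 0, \]
which telescopes to $\Pr(D_{ab} \ge d) = \left(\frac{1-b}{1-a}\right)^d$, so that $D_{ab}+1$ is $\Geometric\!\left(\frac{b-a}{1-a}\right)$. To prove the identity, condition on $\{D_{ab} \ge d\}$ and apply the strong Markov property at the $d$-th downcrossing time. It then suffices to compute $q$, the probability of ever producing another downcrossing when the process sits at $a$ inside the currently active component, and $q'$, the analogous probability starting fresh from $b_0$. The fair-game formula (\ref{fair-game}) gives, from $b_0$, probability $b_0/b$ of reaching $b$ before $0$, and, from $b$, probability $(1-b)/(1-a)$ of reaching $a$ before $1$; if $b$ is not reached, the current excursion dies at $0$ and the process starts afresh at $b_0$. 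The renewal equation $q' = (b_0/b)\cdot(1-b)/(1-a) + (1 - b_0/b)\,q'$ collapses to $q' = (1-b)/(1-a)$. A parallel one-step analysis from $a$, using $\Pr(\text{hit } b \text{ before } 0 \mid \text{at } a) = a/b$ and falling back on $q'$ in case the current excursion dies at $0$, yields $q = (a/b)\cdot(1-b)/(1-a) + (1-a/b)\,q' = (1-b)/(1-a)$ as well.

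The fortunate cancellation is that the conditional probability of an additional downcrossing equals $(1-b)/(1-a)$ regardless of whether the process is poised at $a$ in the ongoing component or at the birth of a brand-new one, which is precisely what makes the downcrossing count memoryless. I expect the main (modest) obstacle to be bookkeeping: one must apply the strong Markov property at the random $d$-th downcrossing time, verify that Proposition \ref{P-embed} lets the embedded $\bzero$-feasible process inherit the distribution of $D_{ab}$ simultaneously for all $b \ge b_0$ and $0 < a < b$, and observe that nothing beyond the fair-game formula (\ref{fair-game})---itself a consequence only of path continuity and the martingale property---enters the argument.
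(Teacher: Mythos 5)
Your proposal is correct and follows essentially the same route as the paper: the same tied construction started from the Geometric($b_0$) configuration (\ref{b0}), promoted to a $\bzero$-feasible process via Proposition \ref{P-embed}, with the memorylessness of $D_{ab}$ extracted from the fair-game formula and the strong Markov property at downcrossing completion times. The only cosmetic difference is that the paper bypasses your renewal equations for $q$ and $q'$ by observing directly that, from any such configuration, ``at least one more downcrossing'' is exactly the event that the next component to reach $b$ subsequently hits $a$ before $1$, which has probability $\frac{1-b}{1-a}$ by a single application of (\ref{fair-game}).
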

The corresponding result (cf. Proposition \ref{P1}(b)) holds for 
 $\bp$-feasible processes in the limit as $\max_i p_i \to 0$.
\begin{proof} 
 As in the proof of Corollary \ref{C1}(b), we may start with 
the Geometric($b_0$) distribution $\bp$ at (\ref{b0}) and use 
the construction in the proof of Proposition \ref{P1}(b). 
Every time a martingale component reaches $b$, the other components must be at positions 
\[ (1-b) \  b_0(1-b_0)^{i-1}, i \ge 1. \]
Similarly, each time the component completes a downcrossing of $[a,b]$  the other components must be at positions  
\[ (1-a) \  b_0(1-b_0)^{i-1}, i \ge 1. \] 
The event that there are no further downcrossings is the event that, after the next time some component reaches $b$, it then reaches $1$ before $a$, and this has probability $(b-a)/(1-a)$ by (\ref{fair-game}).  So
\[ \Pr (D_{ab} = i \vert D_{ab} \ge i) = (b-a)/(1-a), \ i \ge 1. \]
By the same argument $\Pr(D_{ab} = 0 ) = (b-a)/(1-a)$.
\end{proof}

\medskip 
The variance of the $\Geometric(\frac{b-a}{1-a})$ distribution can be written as
\begin{equation}
 \left( \sfrac{1-b}{b-a} \right)^2 + \sfrac{1-b}{b-a}  . \label{Gab-var}
\end{equation}
It is natural to guess, analogous to Corollary \ref{C1}(c), that this is an upper bound on the variance of $D_{ab}$ in any $\bzero$-feasible process.
\begin{conj}
\label{conj_1}
For any $\bzero$-feasible process, 
\[\var (D_{ab}) \le  \left( \sfrac{1-b}{b-a} \right)^2 + \sfrac{1-b}{b-a}  . \]
\end{conj}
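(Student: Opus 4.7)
The plan is to imitate the proof of Proposition \ref{P1}(c), replacing ``a component martingale reaches $b$'' with ``a downcrossing of $[a,b]$ is completed.'' For each integer $k \ge 1$, let $T_k \le \infty$ denote the first time at which exactly $k$ downcrossings of $[a,b]$ have been completed in total across all components. Each $T_k$ is a stopping time of $(\FF_t)$, and on the event $\{D_{ab} \ge k\}$ there is a (random) index $I_k$ such that $M_{I_k}(T_k) = a$, while condition (ii) forces $\sum_{j \ne I_k} M_j(T_k) = 1 - a$.

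The central step is to establish the conditional bound
\[
\Ex\bigl[D_{ab} - k \,\big|\, \FF_{T_k}\bigr] \le \frac{1-b}{b-a}
\quad \text{on } \{D_{ab} \ge k\}.
\]
Conditional on $\FF_{T_k}$ each continued process $M_j(T_k + \cdot)$ is a continuous-path martingale in $[0,1]$ ultimately trapped at $\{0,1\}$, so formulas (\ref{eq-3})--(\ref{eq-4}) apply componentwise. The index $I_k$ contributes expected future downcrossings $a(1-b)/(b-a)$, while each other component $j$ contributes $M_j(T_k)(1-b)/(b-a)$ when $M_j(T_k) \le b$ and $b(1-M_j(T_k))/(b-a)$ otherwise. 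The elementary inequality $b(1-x) \le x(1-b)$, valid for $x \ge b$, majorizes the latter case uniformly by $M_j(T_k)(1-b)/(b-a)$, so summing yields $\tfrac{1-b}{b-a}\bigl(a + \sum_{j \ne I_k} M_j(T_k)\bigr) = \tfrac{1-b}{b-a}$, exactly as claimed.

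Taking expectations and summing $\Ex[(D_{ab}-k)_+] \le \tfrac{1-b}{b-a}\Pr(D_{ab} \ge k)$ over $k \ge 1$, using $\sum_{k \ge 1}(n-k)_+ = n(n-1)/2$ and Lemma \ref{L1}, one arrives at
\[
\Ex\!\left[\frac{D_{ab}(D_{ab}-1)}{2}\right] \le \left(\frac{1-b}{b-a}\right)^{\!2},
\]
and the variance identity $\var(X) = 2\,\Ex[X(X-1)/2] + \Ex[X] - (\Ex X)^2$ then yields (\ref{Gab-var}). The calculation applies verbatim to any $\bp$-feasible process (with the input $\Ex[D_{ab}] = (1-b)/(b-a)$ from Lemma \ref{L1} requiring $\max_i p_i \le b$), and one transfers it to the $\bzero$-feasible setting either by running it directly using the formal definition in section \ref{sec:zero} or by passing to the limit through the $\bp$-feasible approximations supplied by Proposition \ref{P-embed}.

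The principal obstacle I foresee is not the conditional-expectation calculation itself but the limiting step to $\bzero$-feasibility: one needs $D_{ab}$ for the approximating $\bp$-feasible processes (with $\max_i p_i \to 0$) to converge in distribution, or at least in second moment, to $D_{ab}$ for the $\bzero$-feasible limit, and the variance bound to survive the passage. The uniform expectation $\Ex[D_{ab}] = (1-b)/(b-a)$ from Lemma \ref{L1} makes uniform integrability of $D_{ab}$ plausible, but a careful Fatou or lower-semicontinuity argument is what will ultimately seal the conjecture.
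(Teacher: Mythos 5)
Your central step --- the conditional bound $\Ex[D_{ab}-k\mid\FF_{T_k}]\le\sfrac{1-b}{b-a}$ on $\{D_{ab}\ge k\}$ --- is false in general, and the failure is precisely why this statement is stated as a conjecture in the paper rather than proved. At time $T_k$ there may be components $j$ that are \emph{active}: they have reached $b$ at some earlier time and currently sit at $M_j(T_k)=x\in(a,b)$ without yet having returned to $a$. For such a component the expected number of \emph{future} downcrossing completions is not given by (\ref{eq-3}); it equals $\sfrac{b(1-x)}{b-a}$, since the component need only descend to $a$ (not first ascend to $b$) to complete its next downcrossing. Because $b(1-x)-x(1-b)=b-x>0$ for $x<b$, your majorization $b(1-x)\le x(1-b)$ points the wrong way exactly on the interval $(a,b)$ where you need it: each active component there carries an excess of $\sfrac{b-x}{b-a}$ over your accounting, and up to roughly $1/a$ components can be simultaneously active. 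A concrete violation: let one component rise to $b$, fall to some $x_1\in(a,b)$ and freeze; then let a second rise to $b$ and fall to $a$, which is the time $T_1$. A direct computation gives $\Ex[D_{ab}-1\mid\FF_{T_1}]=\sfrac{1-x_1}{b-a}>\sfrac{1-b}{b-a}$.

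This is exactly the obstruction the paper confronts in Proposition \ref{prop_UBD}, whose proof is your argument carried out with the active/inactive bookkeeping done correctly: the identity becomes $b+(b-a)\Ex[D_{ab}-k\mid\FF_{S_k}]=\sum_{i\ \mathrm{inactive}}M_i(S_k)+\sum_{i\ \mathrm{active}}b$, and the active sum forces the extra term $\Ex[\min(N_b,1/a)\,D_{ab}]$ in (\ref{basic_ineq}), which is why only the weaker bound of Proposition \ref{prop_UBD} is obtained. (The analogous difficulty does not arise in the proof of Proposition \ref{P1}(c), because ``reaching $b$'' is a one-shot event and the components that have already reached $b$ only help, via the nonnegative $Z_j$ in (\ref{NZ}).) The limiting step to $\bzero$-feasible processes that you flag as the principal obstacle is not the real issue; the gap is upstream. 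To prove the conjecture one would have to control the joint behavior of simultaneously active components in $(a,b)$, and that is precisely what remains open.
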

The following result establishes a weaker bound.  
One can check that in the $a \uparrow b$ limit this bound is 
first order asymptotic to 
$ ( \sfrac{1-b}{b-a} )^2$, which coincides with the 
first order asymptotics in (\ref{Gab-var}).
\begin{prop}
\label{prop_UBD}
For any $\mathbf{0}$-feasible process and any $0<a<b<1$, 
\[
\var (D_{ab})\leq \left(\left(\frac{1-b}{b-a}+2\,\frac{(1-b)^2}{(b-a)^2}+\mu\right)^{1/2}+\mu^{1/2}\right)^2
-\frac{(1-b)^2}{(b-a)^2}
\]
where $\mu :=\min((2-b)/b^2,1/a^2)$.
\end{prop}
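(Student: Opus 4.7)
My plan is to adapt the argument from the proof of Proposition~\ref{P1}(c), using the stopping times of successive completed downcrossings in place of the ``first time $k$ distinct components reach $b$''. The resulting computation in fact yields a bound coinciding with Conjecture~\ref{conj_1}, which is strictly tighter than the one stated in Proposition~\ref{prop_UBD} (by $2\mu+2\sqrt{A\mu}$), so the target bound follows a fortiori.

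First I would reduce to $\bp$-feasible processes via Proposition~\ref{Prop:consistent}: if the bound holds for all $\bp$-feasible processes with $\max_i p_i$ arbitrarily small, then by monotone convergence of the number of downcrossings it holds for all $\bzero$-feasible processes. So assume a $\bp$-feasible process is given.

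Next, for each $k\geq 1$, let $T_k$ be the stopping time of the $k$-th completed downcrossing aggregated over all components, so that $\{T_k<\infty\}=\{D_{ab}\geq k\}$. On $\{T_k<\infty\}$, exactly one component is at value $a$, and by the single-component formulas~(\ref{eq-3})-(\ref{eq-4}) applied separately to each $M_j$ restricted to $[T_k,\infty)$,
\[
\mathbb{E}[D_{ab}-k\mid\mathcal{F}_{T_k}]=\sum_j \phi(M_j(T_k)),\qquad \phi(x):=\begin{cases} x(1-b)/(b-a),&x\leq b,\\ b(1-x)/(b-a),&x>b.\end{cases}
\]
The pointwise inequality $\phi(x)\leq x(1-b)/(b-a)$ (valid for all $x\in[0,1]$, since $b(1-x)\leq x(1-b)$ iff $b\leq x$) together with $\sum_j M_j(T_k)=1$ gives $\mathbb{E}[D_{ab}-k\mid\mathcal{F}_{T_k}]\leq (1-b)/(b-a)$ on $\{T_k<\infty\}$. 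Summing
\[
\mathbb{E}[(D_{ab}-k)_+]=\mathbb{E}\bigl[\mathbf{1}_{T_k<\infty}\,\mathbb{E}[D_{ab}-k\mid\mathcal{F}_{T_k}]\bigr]\leq \frac{1-b}{b-a}\,\Pr(D_{ab}\geq k)
\]
over $k\geq 1$ yields $\mathbb{E}[D_{ab}(D_{ab}-1)/2]\leq((1-b)/(b-a))\,\mathbb{E}[D_{ab}]=(1-b)^2/(b-a)^2$, and therefore
\[
\var(D_{ab})\leq \frac{1-b}{b-a}+\frac{(1-b)^2}{(b-a)^2},
\]
matching the variance of the Geometric extremal in Proposition~\ref{LDab} and already strictly stronger than the bound in Proposition~\ref{prop_UBD}.

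The main technical check is that the one-dimensional formulas~(\ref{eq-3})-(\ref{eq-4}) may be invoked inside the conditional expectation. This is legitimate because, conditionally on $\mathcal{F}_{T_k}$, the process $(M_j(T_k+s))_{s\geq 0}$ is a continuous $[0,1]$-valued martingale eventually absorbed at $\{0,1\}$ (by condition~(iv) together with the observation that a nonnegative continuous martingale that touches $0$ stays there), and for any such process starting at $x$ the expected number of downcrossings of $[a,b]$ equals $\phi(x)$, derived from the fair-game formula~(\ref{fair-game}) exactly as in Section~\ref{sec:back}. So I expect no substantial obstacle; if anything, my suspicion is that Conjecture~\ref{conj_1} is stated as open only because the authors focused on stopping at \emph{reaching} times (as in Section~\ref{sec:Nb}) rather than at the $k$-th downcrossing.
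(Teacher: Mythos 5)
There is a genuine gap, and it sits exactly where the paper's proof has to work hardest. Your identity $\Ex[D_{ab}-k\mid\FF_{T_k}]=\sum_j\phi(M_j(T_k))$ with $\phi(x)=x(1-b)/(b-a)$ for $x\le b$ is wrong for components that are \emph{active} at time $T_k$, i.e.\ components that have already reached $b$ and have not yet returned to $a$ but are currently sitting at some $x\in(a,b)$. Such a component completes a downcrossing the next time it hits $a$ (it does not need to revisit $b$ first), so its conditional expected number of future downcrossing completions is $b(1-x)/(b-a)$ --- the same expression as in (\ref{eq-4}) --- and \emph{not} $x(1-b)/(b-a)$. Since $b(1-x)-x(1-b)=b-x>0$ on $(a,b)$, the active value strictly exceeds the one you use, so the inequality $\Ex[D_{ab}-k\mid\FF_{T_k}]\le(1-b)/(b-a)$ does not follow. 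The excess is $\sum_{i\ \mathrm{active}}(b-M_i(T_k))/(b-a)$, which can involve as many as $\lfloor 1/a\rfloor$ active components, and controlling this term is precisely what forces the paper to introduce $\min(N_b^{(k)},1/a)$, apply Cauchy--Schwarz against $\Ex[D_{ab}^2]^{1/2}$, and solve the resulting quadratic inequality --- that is where the $\mu=\min((2-b)/b^2,1/a^2)$ term comes from. Your argument, if valid, would prove Conjecture \ref{conj_1}; the conjecture is open precisely because nobody knows how to rule out many simultaneously active components inflating the conditional expectation at the downcrossing times. (Note the paper's proof of Proposition \ref{prop_UBD} does already stop at the $k$-th completed downcrossing, contrary to your closing remark, and explicitly tracks the active/inactive distinction.)

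A secondary, smaller issue: the reduction to $\bp$-feasible processes is unnecessary and not quite as you describe. The paper's definition of a $\bzero$-feasible process already makes $D_{ab}$ well defined on the point process itself, and the proof of Proposition \ref{prop_UBD} argues directly on an arbitrary $\bzero$-feasible process; if you do want to pass through embedded $\oplus$-feasible processes at stopping times $S_k\downarrow 0$, you must note that no downcrossings are completed before the components first reach $b>\max_i M_i(S_k)$, rather than invoke monotone convergence loosely. But the fatal problem is the active/inactive one above.
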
  
\begin{proof}
Fix $0<a<b<1$ and consider an arbitrary $\bzero$-feasible process.
Call a particular component martingale at a particular time {\em active} if it is potentially 
part of a downcrossing of $[a,b]$. 
That is, the martingale is initially inactive; it becomes active if and when it first reaches $b$; it becomes inactive if and when it next reaches $a$; 
and so on.  So a martingale at $x$ is always active if $x>b$, is always 
inactive if $x<a$, but may be active or inactive if $a<x<b$.

Given that a particular martingale is currently at $x$, the mean number of future downcrossing completions equals, by (\ref{eq-3}, \ref{eq-4}) 
\[ \sfrac{x (1-b)}{b-a} \mbox{ if inactive; }
\quad 
\quad 
\sfrac{(1-x) b}{b-a} \mbox{ if active.}
\]
Analogously to the proof of Proposition \ref{P1}(c), consider the time $S_k$ at which the $k$'th downcrossing has been completed. 
On $\{S_k < \infty\}$,
\[ (b-a) \Ex [D_{ab} - k|\FF_{S_k}] = 
(1-b) \sum_{i \ inactive} M_i(S_k) 
+ b \sum_{i \ active} (1 - M_i(S_k)) \]
and because $\sum_i M_i(\cdot) = 1$ this becomes
\[ b + (b-a) \Ex [D_{ab} - k|\FF_{S_k}] = 
\sum_{i \  inactive}  M_i(S_k) + \sum_{i \ active} b . \]
The number of active martingales at time $S_k$ is at 
most $N_b^{(k)}:=$ number of martingales that reached $b$ before time $S_k$. 
So the right side cannot be larger than the value taken 
when $\min(N_b^{(k)},1/a)$ active martingales take values just above $a$ and the remaining value of $1-a\min(N_b^{(k)},1/a)$ is distributed among the inactive martingales. This gives the upper bound
\[ b + (b-a) \Ex [D_{ab} - k|\FF_{S_k}] 
\leq 1-a\min(N_b^{(k)},1/a)+b\min(N_b^{(k)},1/a) 
\mbox{ on } \{S_k < \infty\} . \]
The event $\{S_k < \infty\}$ is the event 
$\{D_{ab} \ge k \}$, so taking expectations and rearranging gives
\[ \Ex[(D_{ab}-k)_+] \le
 \sfrac{1-b}{b-a}\,\Pr (D_{ab}\geq k)+\Ex[\min(N_b^{(k)},1/a)\,\mathbf{1}_{\{D_{ab}\geq k\}}] 
. \]
Because $N_b^{(k)} \le N_b$, summing over all $k \ge 1$ gives
\begin{equation}
\label{basic_ineq}
\sfrac{1}{2}\Ex[D_{ab}(D_{ab}-1)]\leq \sfrac{1-b}{b-a}\,\Ex[D_{ab}]+\Ex[\min(N_b,1/a)\,D_{ab}]. 
\end{equation}
Apply the Cauchy-Schwarz inequality to the second summand on the right   side and use $\Ex[D_{ab}]=\frac{1-b}{b-a}$ to conclude
\begin{equation}
\Ex[D_{ab}^2]\leq\frac{1-b}{b-a}+2\,\frac{(1-b)^2}{(b-a)^2}+2\,\Ex\big[\min(N_b,1/a)^2\big]^{1/2}\,\Ex\big[D_{ab}^2\big]^{1/2}. 
\end{equation}
Next, for  positive constants $C_1,C_2$ we have the elementary implication
\[ \mbox{ if } 0 \le a\leq C_1+2\,C_2\sqrt{a} 
\mbox{ then } \sqrt{a} \leq \sqrt{C_1+C_2^2}+C_2 .  
\]
In our situation, this gives
\[
\sqrt{\Ex [D_{ab}^2] } \leq \left(\frac{1-b}{b-a}+2\,\frac{(1-b)^2}{(b-a)^2}+\Ex\big[\min(N_b,1/a)^2\big]\right)^{1/2}
+\Ex\big[\min(N_b,1/a)^2\big]^{1/2} .
\]
Using first Jensen's inequality and then the result 
(Corollary \ref{C1}(c)) that $\var (N_b) \le (1-b)/b^2$, we see
\[
\Ex [\min(N_b,1/a)^2]\leq\min(\Ex[N_b^2],1/a^2) 
\leq \min((2-b)/b^2, 1/a^2) 
\]
from which the inequality in the proposition readily follows. 
\end{proof}

\subsection{The small spread setting}
\label{sec:small_spread}
Proposition \ref{P1}(a) showed that the spread of $N_b$ could be very small.
To see that the case of $D_{ab}$ must be somewhat different, recall that for a martingale component which reaches $b$, its number of downcrossings 
has the modified Geometric distribution (\ref{eq-5}) with mean 
$b(1-b)/(b-a)$.  So if we fix $b$ and consider limits in distribution 
as $a \uparrow b$, we must obtain a limit of the form
\[ \frac{b-a}{b(1-b)} D_{ab} \  \to_d\ \  \sum_{i=1}^{N_b} \xi_i \]
where each $\xi_i$ has Exponential($1$) distribution.  
And although there will be some complicated dependence between 
$(N_b, \xi_1, \xi_2,\ldots)$ it is clear that the limit cannot be a constant, 
and therefore in any $\bp$-feasible process the variance of $D_{ab}$ 
as $a \uparrow b$
must grow at least as order $(b-a)^{-2}$.
We will not consider that case further here 
(but see an open problem in section \ref{sec:OPext}), instead turning to the case where 
$a/b$ is bounded away from $1$.  
Here, in a $\bzero$-feasible process,  $\Ex [D_{ab}]$ grows as order $1/b$ as $b \downarrow 0$. 
The next result shows there exist $\bzero$-feasible processes for which the variance of $D_{ab}$ remains $O(1)$.

The idea behind the construction is to exploit reflection coupling. 
For instance, starting with $2m$ components at $b$, a reflection coupling 
moves the process to a configuration with $m$ components at $a$ and $m$ at $2b-a$ while adding
 $m$ downcrossings; one can extend this kind of construction to make the process pass through a deterministic sequence of configurations while adding a deterministic number of downcrossings.
\begin{prop}
\label{LBDcrit}
For each $0 \le \alpha < 1$ there exists a constant  $C(\alpha)<\infty$ 
such that: given $0 < a_k < b_k \to 0$ with $a_k/b_k \to \alpha$, 
there exist $\mathbf{0}$-feasible processes  such that
\begin{equation}
\label{varLBD}
\limsup_k \ \mathrm{var}(D_{ak,b_k})\leq C(\alpha).
\end{equation}
\end{prop}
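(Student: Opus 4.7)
The plan is to design, for each $k$, a $\bzero$-feasible process whose number of downcrossings of $[a_k,b_k]$ is nearly deterministic, by exploiting reflection coupling exactly as suggested by the paragraph preceding the proposition. The basic device: starting with $2n$ components at a common height $h \geq b_k$, pair them up and for each pair run a reflection coupling (a two-allele Wright--Fisher diffusion with constant sum $2h$) stopped at the first hit of $\{a_k, 2h-a_k\}$. Because the descending partner passes from above $b_k$ down to $a_k$ exactly once before stopping and the ascending partner never reaches $a_k$, each pair contributes exactly one downcrossing, so a batch of $n$ such pairs contributes the deterministic number $n$.

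The construction then has three phases. In Phase 1 (preparation), I use a Survivor-type $\bzero$-feasible process modelled on the proof of Corollary \ref{C1}(a) to reach a deterministic state with $2m_k := 2\lfloor 1/(2b_k)\rfloor$ components at a common height $h_0 := 1/(2m_k) \geq b_k$. By arranging the Survivor construction to pass through configurations $\{j \text{ at } 1/j\}$ for $j \geq 2m_k$ (so all component heights stay $\leq h_0$, in particular never exceeding $b_k$ from above), Phase 1 produces no downcrossings of $[a_k,b_k]$ at all. In Phase 2 (deterministic batching), I iterate the reflection coupling device: in Round $j$, pair the $m_k/2^{j-2}$ components currently at the common high level $h_{j-1} := 2^{j-1}(h_0-a_k) + a_k$ and couple each pair to $(a_k, h_j := 2h_{j-1}-a_k)$. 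Each round adds the deterministic number $m_k/2^{j-1}$ of downcrossings, and running $J_k$ rounds (with $J_k$ chosen so that $h_{J_k}$ remains $\leq 1$ while the ``high'' count is still a positive integer) leaves a configuration with roughly $2m_k$ components at $a_k$ and $O(1)$ components near height $1$, having contributed a deterministic total of order $2m_k \sim 1/b_k$ downcrossings.

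Phase 3 (run to fixation) must produce the remaining $\sim\alpha/(b_k(1-\alpha))$ expected downcrossings required by Lemma \ref{L1} while contributing only bounded variance. This is the main obstacle of the proof. Naively letting the $\sim 1/b_k$ components at $a_k$ evolve independently to absorption gives variance of order $1/b_k$ in the downcrossing count, so one must correlate the evolutions across many components. For $\alpha<1/2$ a second cascade $(2^j a_k, 2^j a_k)\to (0, 2^{j+1} a_k)$ adds no downcrossings as long as $2^{j+1} a_k < b_k$ and then produces only $O(1)$ further random downcrossings once the cascade enters the regime where heights cross $b_k$. For general $\alpha\in[0,1)$, and in particular for $\alpha$ close to $1$ where the low components cannot be pair-merged to $0$ without immediately crossing $[a_k,b_k]$, one needs a more delicate interlocking construction---for instance a further reflection-coupling cascade run upward from $a_k$, or a multi-allele Wright--Fisher batch that shares randomness across many pairs---so as to produce the remaining $\sim 1/b_k$ downcrossings in an almost deterministic way. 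Once this Phase 3 is shown to add variance bounded by a constant depending only on $\alpha$, the bound \eqref{varLBD} follows since the Phase 2 contribution is exactly deterministic and Phase 1 contributes no downcrossings at all.
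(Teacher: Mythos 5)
Your Phases 1 and 2 are sound and match the paper's starting point: reflection couplings with conserved sums produce deterministic batches of downcrossings, and Proposition \ref{P-embed} (or a \emph{Survivor}-type preamble) handles $\bzero$-feasibility. But there is a genuine gap exactly where you flag ``the main obstacle,'' namely Phase 3, and it is the heart of the proof. After your dyadic cascade you are left with order $1/b_k$ components at level $a_k$; for $\alpha>0$ each of these still carries $\Theta(1)$ expected future downcrossings, so they too must be processed deterministically, and your sketched fix does not accomplish this. For $\alpha<1/2$ the downward cascade $(2^ja_k,2^ja_k)\to(0,2^{j+1}a_k)$ stops being downcrossing-free once $2^{j+1}a_k\ge b_k$, at which point there remain $\Theta(\alpha/b_k)$ components near $b_k$, not $O(1)$; letting them run independently to fixation would give variance $\Theta(1/b_k)$. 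For $\alpha$ close to $1$ you offer no construction at all. So the statement is not proved as written.

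The paper closes this gap with Lemma \ref{L:core}: a stage-based construction that repeatedly applies the ``freeze everything outside a band, run a rescaled Wright--Fisher inside it until the band endpoints are hit'' device to whichever of four configurations currently holds (many components at $b$; many active components in $(a,b)$; many inactive components in $(a,b)$; many components in $(0,a]$). Because $\sum_i M_i(t)=1$, each stage has a deterministic ranked outcome and hence adds a deterministic number of downcrossings, and a termination argument shows the scheme ends with at most one component in $(b,1]$ and at most $K(\alpha)=6\lfloor 1/(1-\alpha)\rfloor-1$ components in $(0,b]$. Only then is randomness allowed: with at most $K(\alpha)+1$ surviving components, the residual $D_{ab}$ is a sum of at most $K(\alpha)+1$ modified-Geometric variables from \eqref{eq-5}, whose second moments stay bounded as $b\to0$ with $a/b\to\alpha<1$, giving $\var(D_{ab})\le(K+1)^2\,\Ex[G_1^2]=O_\alpha(1)$. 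To salvage your approach you would need Phase 3 to achieve the same reduction---eliminate all but $O_\alpha(1)$ nonzero components by deterministic moves, uniformly in $\alpha\in[0,1)$---which is precisely the content you left unproved; note also that you would still need the final step bounding the variance of the residual random phase.
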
  
\begin{proof} 
Fix $k$, set $(a,b)=(a_k,b_k)$ and with an abuse of notation write $\alpha=a_k/b_k$. 
By Proposition \ref{P-embed} we may assume we have a
$\bp_0$-feasible process, where $\bp_0$ has finite support 
and its components are in $(0,b)$.  

The proof makes repeated use of the following kind of construction.
Specify an interval $[a_0,b_0]$, freeze martingale components initially outside that interval, run the other components as a rescaled Wright-Fisher 
process and freeze them upon reaching $a_0$ or $b_0$ 
(typically there will be one component ending within $(a_0,b_0)$). 
Note this construction has a particular ``deterministic" property,
 that in the final random 
configuration $(M_i(t), i \ge 1)$ the ranked (decreasing ordered) values 
$\rank (M_i(t), i \ge 1)$ are non-random, determined by  the (ranked) 
initial values. 
This holds because $\sum_i M_i(t) = 1$.

The central idea of the proof
 is the following lemma.
\begin{lemma}
\label{L:core}
Write $K=K(\alpha)=6
\lfloor \sfrac{1}{1-\alpha} \rfloor  -1$.  There exists a $\bp_0$-feasible process  which reaches a configuration $\bp_1$ with at most 
one martingale with value in $(b,1]$ and at most $K$ martingales taking values in $(0,b]$, having accomplished a deterministic number of downcrossings before that time.
\end{lemma}
\begin{proof}
We construct the process in stages.  
At the start of each stage, we consider  the first case in the list below 
which holds, and do the construction specified below for that case.
If no case holds then stop; note the property 
``at most $K$ martingales taking values in $(0,b]$" will then be satisfied.

{\em Case 1.} There are at least $1+\lfloor \sfrac{1}{1-\alpha} \rfloor $ martingales at $b$;

{\em Case 2.} There are at least $2 \lfloor \sfrac{1}{1-\alpha} \rfloor + 1$ active martingales in $(a,b)$;

{\em Case 3.}  There are at least $2 \lfloor \sfrac{1}{1-\alpha} \rfloor  + 1$ inactive martingales in $(a,b)$; 

{\em Case 4.} There are at least $\lfloor \sfrac{1}{1-\alpha} \rfloor $ martingales in $(0,a]$.

{\em Construction in case 1.}  We let the martingales at $b$ evolve according to the appropriately rescaled Wright-Fisher diffusion, while freezing all other martingales, and then freeze the evolving martingales that reach level $a$.  At least $\lfloor \sfrac{1}{1-\alpha} \rfloor $ martingales will reach level $a$, and exactly one will be above $b$.  Once all martingales are frozen, we let those at $a$ evolve as the rescaled Wright-Fisher diffusion until they reach $0$ or $b$.  Finally, if initially there were martingales above $b$, then we let all the martingales above $b$ evolve as the appropriate Wright-Fisher diffusion and freeze those that reach $b$. This procedure adds a deterministic number of downcrossings (all in the first step), and leaves exactly one martingale above $b$.

{\em Construction in cases 2 and 3.}  In case 2 we let the active martingales in $(a,b)$ evolve until they either reach $a$ or $b$ and freeze them at that time. All except one of these martingales reach $a$ or $b$, so  either at least $\lfloor \sfrac{1}{1-\alpha} \rfloor + 1$ martingales end at $b$, or at least $\lfloor \sfrac{1}{1-\alpha} \rfloor $ martingales end at $a$, adding a deterministic number of downcrossings.  So the ending configuration will fit case 1 or case 4. 
In case 3 we do the same but with the inactive martingales instead; 
 no additional downcrossings are added.

{\em Construction in case 4.}  We let the martingales in $(0,a]$ evolve until they reach $0$ or $b$ and freeze them at that time. At least one of them must  reach $0$, and no additional downcrossings are added.  

\medskip

The sequence of stages must end, because:  in each case 4 stage at least
one martingale is stopped at $0$, and each case 1 stage creates at least one downcrossing, so there can be only a finite number of such stages; and each case 2 or 3 stage is followed by such a stage.

Moreover each stage is ``deterministic", in the previous sense that the ranked configuration at the end of the stage is determined by the ranked configuration at the start, and therefore the ranked configuration $\bp_1$ at the termination of the entire construction is non-random,  determined by the initial 
configuration $\bp_0$. 
This implies the total number of downcrossings is deterministic, because the number within each stage is determined by that stage's starting configuration.
As already mentioned, $\bp_1$ has the property 
``at most $K$ martingales taking values in $(0,b]$" by the termination condition.  The number of martingale components taking values in $(b,1]$ is at most $1$, because each case $1$ stage ends that way and the other cases do not allow components to exceed level $b$.
\end{proof}

In view of Lemma \ref{L:core}, to complete the proof of the proposition it suffices to show (\ref{varLBD}) for some $\mathbf{p}_1$-feasible process with $\mathbf{p}_1$ as in Lemma \ref{L:core}.  
In fact we can take an arbitrary such process.  
The point is that (as noted earlier) the number of downcrossings $D_{ab}$ has a representation 
of the form 
\[ D_{ab} = \sum_{i=1}^{N^*} G_i \]
where $N^*$ is the number of martingale components that hit $b$, and each $G_i$ has the modified Geometric distribution (\ref{eq-5}).
Without any knowledge of the dependence between 
$(N^*, G_1, G_2,\ldots)$, 
the fact $N^* \le K+1$ implies  
\[ \var (D_{ab} ) \le \Ex [D_{ab}^2] \le (K+1)^2 \Ex [G_1^2] .\]
It is easy to check that $\Ex [G_1^2]$ is bounded in the limit as $b \to 0$ with $a/b \to \alpha < 1$, 
and (\ref{varLBD}) follows.
\end{proof}

 \section{$\bzero$-feasible processes}
\label{sec:zero}
In section \ref{sec:def_0} we will give one formalization of the notion of a $\bzero$-feasible process
introduced informally in
section \ref{sec:0-int}, and in sections \ref{sec:construct} and 
\ref{sec:all_embed} we give results allowing one to relate constructions and properties of 
$\bzero$-feasible processes to those of $\bp$-feasible processes.

There are several possible choices for the level of generality 
we might adopt. 
The ``canonical" example of the $\bzero$-Wright-Fisher process, 
and the ``{\em Survivor}" process featuring in Corollary \ref{C1}(a),
 have the 
property that at times $t>0$ the  process has only finitely many 
non-zero components, so we could make this a requirement. 
Instead we will allow a countable number of non-zero components --  ``because we can". 
In the other direction, consider the construction of reflecting Brownian motion
$R(t)$ from standard Brownian motion $W(t)$ as 
\[ R(t) := W(t) - \min_{s \le t} W(s)  \]
and run the process until $R(\cdot)$ hits $1$.  
Within our setting, interpret this as saying that at time $t$ there is one contestant with chance $R(t)$ of winning, the remaining chance $1 - R(t)$ being split amongst an infinite number of  unidentified contestants each with only infinitesimal chance of winning.  Informally this is a
$\bzero$-feasible process such that 
\begin{equation}
\mbox{ $N_b$  has Geometric($b$) 
distribution for every $0<b<1$},
\label{G_all}
\end{equation}
 strengthening the assertion of 
Corollary \ref{C1}(b), 
but it does not fit our set-up which will require the unit mass to be 
split as a random discrete distribution at times $t>0$.  
In fact  Corollary \ref{C-b} implies that, within our formalization, no 
 $\bzero$-feasible process can have property (\ref{G_all}).
One could choose a more general set-up which allows such ``dust", as in the literature \cite{MR2253162} cited below, but we are not doing so.

The existing classes of processes in the literature with somewhat similar qualitative behavior
--  in the theory of  stochastic fragmentation and coagulation processes \cite{MR2253162} which studies partitions of unit mass into clusters, 
or in  population genetics inspired processes associated with 
Kingman's coalescent, are (to our knowledge) explicitly Markovian, 
in which context the question becomes determining the 
{\em entrance boundary} of a specific Markov process 
\cite{MR1808372,MR1112408}.  Our setting differs in that
 we wish to continue making only the ``martingale"  assumptions 
(ii,iii,iv) at the start of the Introduction, and we are seeking to define a class 
of processes.

The following observation shows that the most naive formalization 
does not work. 
\begin{lemma}
\label{L:naive}
Let $I$ be countable, 
There does not exist any process 
$(M_i(t), 0 \le t < \infty, i \in I)$ adapted to a filtration $(\FF_t)$ 
and satisfying \\
(i) for each $t > 0$ we have 
$0 \le M_i(t) \le 1 \ \forall i$ and $\sum_i M_i(t) = 1$; \\
(ii) for each $i $, 
$(M_i(t), t \ge 0)$ is a martingale;\\
(iii) $\sup_i M_i(t) \to 0$ a.s. as $t \downarrow 0$.
\end{lemma}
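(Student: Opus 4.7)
The plan is to argue that conditions (ii) and (iii) force each component to vanish identically on $(0,\infty)$, which is incompatible with the normalization $\sum_i M_i(t)=1$ in (i). The engine is a very short three-step chain: bounded convergence $\Rightarrow$ martingale constancy of means $\Rightarrow$ nonnegativity plus countability.

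First, I would fix $i\in I$ and take means as $t\downarrow 0$. Since $0\le M_i(t)\le\sup_j M_j(t)\to 0$ a.s.\ by (iii), and the integrand is bounded by $1$, bounded convergence yields $\lim_{t\downarrow 0}\Ex[M_i(t)]=0$.

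Second, I would invoke (ii). For any $0<s\le t$ the martingale property gives $\Ex[M_i(t)]=\Ex[M_i(s)]$, so $t\mapsto\Ex[M_i(t)]$ is constant on $(0,\infty)$; by the first step this constant is $0$. Combined with $M_i(t)\ge 0$ from (i), this forces $M_i(t)=0$ a.s.\ for every fixed $t>0$ and every $i\in I$.

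Third, I would use countability of $I$ to swap the quantifiers with the a.s.\ statement. Fix any $t>0$; the exceptional sets $\{M_i(t)>0\}$ are null and indexed by a countable set, so their union is still null. Hence with probability one, $M_i(t)=0$ for all $i\in I$ simultaneously, giving $\sum_i M_i(t)=0$ a.s. This contradicts $\sum_i M_i(t)=1$ from (i), completing the proof.

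There is no real obstacle here; the only point that requires any care is the quantifier swap in the third step, which is exactly why countability of $I$ is assumed in the lemma. The lemma's moral is that any workable formalization of a $\bzero$-feasible process must relax one of the three conditions (the authors go on to relax the index set or allow the mass to be split as a random discrete distribution at each $t>0$ rather than indexed by a fixed countable $I$).
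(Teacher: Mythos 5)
Your argument is correct and is essentially the paper's own proof: both use the limit $\lim_{t\downarrow 0}\Ex[M_i(t)]=0$ from (i) and (iii) together with the constancy of $\Ex[M_i(t)]$ from the martingale property to conclude $\Ex[M_i(t)]=0$ for all $i$ and $t$, contradicting the normalization in (i). Your extra third step (passing from $\Ex[M_i(t)]=0$ to a simultaneous a.s.\ statement via a countable union of null sets) is a slightly more explicit way of deriving the contradiction than the paper's one-line appeal to (i), but it is the same argument.
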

\begin{proof}
The martingale property implies $\Ex [M_i(t)]$ is constant in $t$.
But by (i) and (iii) we have 
\[ \lim_{t \downarrow 0} \Ex [M_i(t)] = 0  .\] 
So $\Ex[ M_i(t)]  = 0$ for all $i$ and $t$,  contradicting (i).
\end{proof}

\subsection{A formalization}
\label{sec:def_0}
The issue, indicated by  Lemma \ref{L:naive} above and the particular {\em Survivor} example in Remark \ref{R:survivor}, 
is to find a formalization which preserves the identity of martingale components
as $t$ varies.
The often used device of simply ranking (decreasing-ordering) components at each time $t$ does not work.
Our formalization combines ranking and a point process representation.  
This is admittedly somewhat {\em ad hoc}; a different but equivalent formalization 
is mentioned in Remark \ref{R1}.

A probability distribution $\bp$ with 
$p_1 \ge p_2 \ge p_3 \ge \ldots$ is called {\em ranked};
write $\nabla$ for the space of ranked 
probability distributions. For a general discrete distribution
$\bq = (q_j, j \in J)$ write
$\rank( \bq)$ for its decreasing ordering, where zero entries are omitted. 
More generally, 
for a collection $(A_j, j \in J)$ of objects with the same index set as 
$ (q_j, j \in J)$, write
$\rank(A_j, j \in J || \bq)$ for the collection re-ordered so that $\bq$ is ranked
(this is not completely specified if the values $q_j$ are not distinct, 
but the arbitrariness does not matter for our purposes).

Write $C_0$ for the space of continuous functions 
$f:[0,\infty) \to [0,1]$ with $f(0) = 0$.  
Consider a random point process on $C_0$.  
That is, a realization of the process is (informally) an unordered countable set $\{f_\alpha(\cdot)\}$ 
of functions or (formally) the counting measure associated with that set. 
We will use the  former notation, which is more intuitive.
We define a $\bzero$-feasible process to be a random point process 
$\{M_\alpha(\cdot)\}$ on $C_0$
such that 
\[ 0 \le M_\alpha(t) \le 1; \quad \sum_\alpha M_\alpha(t) = 1, \quad 
0<t<\infty\]
\[ \max_\alpha M_\alpha(t) \to 0 \mbox{ a.s. as } t \downarrow 0 \]
and with the following property. 
For each $t_0 > 0$ and each ranked $\bp$,  
\[ \mbox{ 
Conditional on $\rank(M_\alpha(t_0)) = \bp$ and on $\FF(t_0)$, the ranked process 
} \]
\begin{equation}
\rank(M_\alpha(t_0 + \cdot)) || \{M_\alpha(t_0)\}) \mbox{ is $\bp$-feasible.}
\label{rank-M}
 \end{equation}
In words,  given $t_0$ we simply label component martingales as $1,2,3,\ldots$ in decreasing order of their values at $t_0$, and we can use this labeling over $t_0 \le t < \infty$ to define a process 
$(M_i(t_0+t), t \ge 0, i \ge 1)$ 
which we require to be a $\bp$-feasible process, where $\bp$ is the ranked 
ordering of $(M_\alpha(t_0))$.  
For $\FF_t$ we take the natural filtration, generated by the restriction of the point process to $(0,t]$.

\medskip
By standard arguments, property (\ref{rank-M}) extends to any stopping time
$S$ with $0<S<\infty$:  
\[ \mbox{ 
Conditional on $\rank(M_\alpha(S)) = \bp$ and on $\FF(S)$, the ranked process 
} \]
\begin{equation}
\rank(M_\alpha(S + \cdot)) || \{M_\alpha(S)\}) \mbox{ is $\bp$-feasible.}
\label{rank-S}
 \end{equation}

In our initial definition of a $\bp$-feasible process we assumed the initial 
configuration $\bp$ was deterministic.  
Now define a $\oplus$-feasible process to be a mixture over $\bp$ of 
$\bp$-feasible processes; in other words, a process 
$(M_i(t), i \geq 1, t \ge 0)$ which, conditional on 
$(M_i(0), i \geq 1) = (p_i,i \ge 1)$, is a $\bp$-feasible process. 
So the ranked process $\rank(M_\alpha(S + \cdot)) || \{M_\alpha(S)\})$ in (\ref{rank-S}), considered unconditionally, 
is a $\oplus$-feasible process, and we describe the relationship 
(\ref{rank-S}) by saying this $\oplus$-feasible process is {\em embedded} into the 
$\bzero$-feasible process via the stopping time $S$.
Similarly, any stopping time within a  $\oplus$-feasible process 
specifies an embedded $\oplus$-feasible process.

 \begin{rmk}
\label{R1}
An essentially equivalent formalization would be to assign random 
$U[0,1]$ labels $U_\alpha$  to component martingales, so the state 
of the process at $t$ is described via the pairs $(U_\alpha, M_\alpha(t))$ 
for which $M_\alpha(t) > 0$, and this can in turn be described via the 
probability measure $\sum_\alpha M_\alpha(t) \delta_{U_\alpha}$ or its distribution function.  We will use this ``random labels" idea in an argument below.
\end{rmk}

\subsection{A general construction of $\bzero$-feasible 
processes}
\label{sec:construct}
 Given a  $\bzero$-feasible 
process and stopping times $S_k \downarrow 0$ a.s., 
the associated embedded $\oplus$-feasible processes are embedded within each other, and their initial values 
$(M^{(k)}_i, i \ge 1)$ satisfy 
$\max_i M^{(k)}_i \to 0$ a.s..
The following result formalizes the converse idea: one can  construct a  $\bzero$-feasible 
process from a sequence of $\bp$-feasible or more generally $\oplus$-feasible 
processes embedded into each other, via Kolmogorov consistency.
\begin{prop}
\label{Prop:consistent}
Suppose that $(\mu_k, k \geq 1)$ are probability measures on $\nabla$ 
and that for each $k$ there are families 
$(M^k_i(t), i \geq 1, 0 \le t < \infty)$ such that \\
(i) $(M^k_i(0), i \geq 1)$ has distribution $\mu_k$.\\
(ii) Conditional on $(M^k_i(0), i \geq 1) = \bp$, the process 
$\mathbf{M}^k =
(M^k_i(t), 0 \le t < \infty, i \geq 1)$ is $\bp$-feasible. \\
(iii) For $k \ge 2$ there is a stopping time $T_k$ for $\mathbf{M}^k$ such that $t_k := \Ex [T_k] < \infty$ and 
$\rank (M^k_i(T_k), i \ge 1)$ has distribution $\mu_{k-1}$. \\
(iv) $\sum_k t_k < \infty$.  \\
(v) $M_1^k(0) \to_p 0$ as $k \to \infty$.\\
Then there exists a $\bzero$-feasible process $\{M_\alpha(\cdot)\}$
 which is consistent 
with the families above, in the following sense.  
There exist stopping times $S_k$ such that for each $k \ge 1$
\[ \Ex[ S_k] = \sum_{j > k} t_j , \quad S_k - S_{k+1} \ed T_{k+1}
 \] 
and the  embedded process  
$\rank(M_\alpha(S_k + \cdot)) || \{M_\alpha(S_k)\}) $ is distributed as 
$M^k(\cdot)$.
\end{prop}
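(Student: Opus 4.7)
The plan is a backward-in-$k$ Kolmogorov gluing. Condition (iii) tells us $\rank \mathbf{M}^k(T_k)$ has law $\mu_{k-1}$, which is exactly the initial law of $\mathbf{M}^{k-1}$, so the regular conditional distribution of $\mathbf{M}^{k-1}$ given $\rank \mathbf{M}^{k-1}(0) = \bp$ -- which is a $\bp$-feasible process -- can be concatenated onto $\mathbf{M}^k$ at its stopping time $T_k$ on the event $\{\rank \mathbf{M}^k(T_k) = \bp\}$. Concretely, I would place the segment $\mathbf{M}^k|_{[0,T_k]}$ on a random time interval $[S_k, S_{k-1}]$ of length $T_k$ for each $k \ge 2$, and place the full process $\mathbf{M}^1$ on $[S_1, \infty)$, with $S_k := \sum_{j > k}(S_{j-1} - S_j)$. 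Condition (iv) gives $\Ex[S_k] = \sum_{j > k} t_j < \infty$, and since $(S_k)$ is decreasing, $S_k \downarrow 0$ a.s. Existence of the joint law across all stages simultaneously is then furnished by Kolmogorov's extension theorem, once one checks the standard measurability of the regular conditional laws in the initial configuration $\bp$.

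To track the identity of components across stages I would use the random-labels device of Remark \ref{R1}: within each $\mathbf{M}^k$, attach i.i.d.\ $U[0,1]$ labels to components, and at each gluing time $T_k$ relabel the positive components of $\mathbf{M}^k$ so their labels agree, by rank, with those of the initial positive components of $\mathbf{M}^{k-1}$. Components at value zero at the junction retain arbitrary labels, which is irrelevant. The resulting point process $\{M_\alpha(\cdot)\}$ on $C_0$ has $\sum_\alpha M_\alpha(t) \equiv 1$ and continuous paths by construction. Property (\ref{rank-M}) at any $t_0 > 0$ follows from the strong-Markov version (\ref{rank-S}) applied within the unique stage containing $t_0$, which is well-defined a.s.\ because $S_k < t_0$ for all sufficiently large $k$. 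The identities $\Ex[S_k] = \sum_{j > k} t_j$ and $S_k - S_{k+1} \ed T_{k+1}$ are immediate from the definition of $S_k$.

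The main obstacle is the a.s.\ boundary condition $\max_\alpha M_\alpha(t) \to 0$ as $t \downarrow 0$. Assumption (v) provides only $M^k_1(0) = \max_\alpha M_\alpha(S_k) \to 0$ in probability, which along the full sequence of junction times gives convergence only in probability. To upgrade this I would first pass to a Borel--Cantelli subsequence $k_j$ along which $M^{k_j}_1(0) \to 0$ a.s., and then control intra-stage fluctuations between consecutive junctions using the optional-stopping / gambling-strategy inequality behind Lemma \ref{L1}: for each component $\alpha$ of a $\bp$-feasible process, $\Pr(\sup_t M_\alpha(t) \ge \varepsilon) \le p_\alpha/\varepsilon$, and together with the total-mass constraint $\sum_\alpha p_\alpha = 1$ and the a.s.\ smallness of both the start-of-stage and end-of-stage maxima one should be able to push this to a uniform intra-stage bound on $\max_\alpha M_\alpha$. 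Once the a.s.\ boundary behaviour is secured, all properties required by the definition of a $\bzero$-feasible process and the claimed consistency with the input families $(\mathbf{M}^k)_{k\ge 1}$ are in hand.
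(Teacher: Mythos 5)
Your construction is essentially the paper's own proof: the same backward gluing of $\mathbf{M}^{k-1}$ onto $\mathbf{M}^k(T_k+\cdot)$ via Kolmogorov consistency, the same use of random $U[0,1]$ labels to carry component identities across junctions, and the same bookkeeping for $S_k$. To your credit you flag the one point the paper dismisses as ``straightforward,'' namely the a.s.\ condition $\max_\alpha M_\alpha(t)\to 0$ as $t\downarrow 0$; but your proposed route there does not quite close as stated. Summing the per-component maximal inequality $\Pr(\sup_t M_\alpha(t)\ge\varepsilon)\le p_\alpha/\varepsilon$ over components only yields the useless bound $\sum_\alpha p_\alpha/\varepsilon = 1/\varepsilon$, so no union bound, per stage or otherwise, gives a uniform intra-stage estimate. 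The ingredient you are missing is to use that same inequality \emph{in expectation}: the number $N_\varepsilon$ of components that ever reach level $\varepsilon$ satisfies $\Ex[N_\varepsilon]\le 1/\varepsilon$ (the Lemma \ref{L1} computation), hence is a.s.\ finite, so the uniform statement reduces to showing that each of finitely many individual paths tends to $0$ at $0$. That last pointwise fact is where your Borel--Cantelli subsequence enters (giving $\liminf_{t\downarrow 0}M_\alpha(t)=0$ a.s.), combined with a.s.\ finiteness of upcrossings of any fixed interval $[\varepsilon/2,\varepsilon]$ to rule out $\limsup_{t\downarrow 0}M_\alpha(t)>0$. With that substitution your argument is complete and matches the paper's.
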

\begin{proof}
By conditions (i)-(iii), 
for each $k \ge 2$ we can represent the process $\mathbf{M}^{k-1}$ as the process 
$\mathbf{M}^k(T_k + \cdot)$; more precisely, we can couple the two processes 
such that
\begin{equation}
 M^{k-1}_i(t) = \rank(M^k_i(T_k + t) || (M^k_i(T_k), \ i \geq 1)) . 
\label{Mk-1}
\end{equation}
Then by the Kolmogorov consistency theorem we can assume this representation holds simultaneously 
for all $k$. 
We now attach labels $\alpha$ to the component martingales by the 
following inductive scheme.  
For $k = 1$, to each of the indices $i$ designating a component martingale $M^1_i(\cdot)$ we associate an independent 
Uniform$(0,1)$ label. 
For $k = 2$, a component martingale $M^2_i(\cdot)$ might be
zero or non-zero at $T_2$.  If non-zero then we 
copy the label already associated within $M^1(\cdot)$ 
via the coupling (\ref{Mk-1}).  If zero the we create a new independent 
Uniform$(0,1)$ label. 

Continue for each $k$ this scheme of copying or creating labels.
For each label $\alpha$, the sample path of that martingale component in
the process $M^{k+1}$ is obtained from the sample path in $M^k$ by 
inserting an extra initial segment.  By (iv) the path converges as $k \to \infty$ to 
a function $M_\alpha(t), 0 \le t < \infty$, and by (v) we must have 
$M_\alpha(0) = 0$.  
The remaining properties are straightforward.
\end{proof}

\subsection{All $\bp$-feasible processes embed}
\label{sec:all_embed}
Proposition \ref{Prop:consistent} enables construction of specific 
$\bzero$-feasible processes.  
The following result implies that any $\bp$-feasible process can be embedded into some $\bzero$-feasible process -- simply splice the $\bzero$-feasible process in the proposition to the given 
$\bp$-feasible process at time $S$.  
We already used this fact in the proofs of Corollary \ref{C1}(b) and Proposition \ref{LBDcrit}.
\begin{prop}
\label{P-embed}
Given any ranked $\bp$, there exists a $\bzero$-feasible 
process $\{M_\alpha(\cdot)\}$ and a stopping time $S$ such that 
$\rank( \ \{M_\alpha(S)\} \ ) = \bp$.
\end{prop}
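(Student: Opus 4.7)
The plan is to invoke Proposition \ref{Prop:consistent} with an explicit dyadic refinement chain of configurations. Fix a ranked $\bp$ and for each $k \ge 1$ let $\bp^{(k)}$ denote the ranked rearrangement of the multiset containing $2^{k-1}$ copies of $p_i/2^{k-1}$ for every $i \ge 1$. Thus $\bp^{(1)} = \bp$, each $\bp^{(k+1)}$ is obtained from $\bp^{(k)}$ by splitting every entry into two equal halves, and $\max_\alpha p^{(k)}_\alpha = p_1/2^{k-1} \to 0$; taking $\mu_k$ to be the point mass at $\bp^{(k)}$ then handles hypotheses (i) and (v) of Proposition \ref{Prop:consistent} automatically.

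For each $k \ge 2$ I would build a $\bp^{(k)}$-feasible process $\mathbf{M}^k$ in two phases. In the first phase I fix, before the dynamics begin, a pairing of the components of $\bp^{(k)}$ so that each pair consists of the two halves of a common entry of $\bp^{(k-1)}$, and I process the pairs sequentially in some fixed enumeration. Within each pair I freeze all other martingales and run the two-component reflection coupling of section \ref{sec:back} (rescaled Wright-Fisher with constant sum) until one side hits $0$; by symmetry the surviving component takes exactly the value of the corresponding entry of $\bp^{(k-1)}$. Let $T_k$ denote the time at which every pair has resolved; the construction guarantees $\rank(M^k_\alpha(T_k)) = \bp^{(k-1)}$ deterministically. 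In the second phase, for $t \ge T_k$ I let $\mathbf{M}^k$ evolve as any $\bp^{(k-1)}$-feasible process (for instance a rescaled Wright-Fisher diffusion run to fixation), which makes $\mathbf{M}^k$ a genuine $\bp^{(k)}$-feasible process and verifies hypotheses (ii) and (iii). For $k = 1$ I simply take $\mathbf{M}^1$ to be any $\bp$-feasible process.

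It remains to arrange hypothesis (iv), $\sum_k t_k < \infty$. Each pair's exit time has finite mean, so I can insert a deterministic time change inside the first phase of each $\mathbf{M}^k$ (speeding up each pair by a suitable positive factor, which preserves the martingale property and leaves the ranked configuration at $T_k$ unchanged) to force $\Ex[T_k] = 2^{-k}$; when $\bp$ has infinite support the same idea applies provided the per-pair speedup factors are chosen so that the enumerated pairs contribute a convergent series of expected durations. With all five hypotheses verified, Proposition \ref{Prop:consistent} produces a $\bzero$-feasible process $\{M_\alpha(\cdot)\}$ with stopping times $S_k$ satisfying $\rank(\{M_\alpha(S_k)\}) = \bp^{(k)}$, and the required embedding is $S := S_1$.

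The main obstacle is conceptual rather than analytic: the pairing between components of $\bp^{(k)}$ must be committed to \emph{before} running the level-$k$ dynamics, because otherwise the random reflection couplings could leave the ranked configuration in some arrangement other than $\bp^{(k-1)}$ (two halves of $p_1$ might, for instance, be mistakenly coupled with halves of $p_2$). Once the dyadic scheme fixes the pairing unambiguously, and a time change handles the (potentially infinite) accumulation of pair-resolution times, every remaining ingredient is routine.
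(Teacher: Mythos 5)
Your argument is correct, and it shares the paper's overall architecture --- build a chain of configurations refining $\bp$ down to dust, realize each refinement step by a feasible process that merges components through two-component reflection couplings, and splice everything together with Proposition \ref{Prop:consistent} --- but it implements the two key technical steps differently. For the refinement, the paper splits only the entries exceeding $2^{-k}$ (each into $2^{j(i)}$ equal pieces) and merges them back via Lemma \ref{L:012}, a greedy pairwise Brownian-coupling argument valid for any two configurations agreeing outside finite index sets and satisfying the domination condition $\max_{i\in I_1}p_i\le\min_{i\in I_2}q_i$; your uniform dyadic halving makes the merging step essentially trivial, since each predetermined pair deterministically reassembles its parent entry, so you bypass Lemma \ref{L:012} entirely. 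For the expected-time bound (hypothesis (iv)), the paper arranges that some component always evolves as standard Brownian motion, so that $Q(t)-t$ is a submartingale (Lemma \ref{L:Qt}) and $\sum_k\Ex[T_k]$ telescopes against $\sum_i p_i^2\le 1$ with no time change needed; you instead force $\Ex[T_k]=2^{-k}$ by deterministic rescaling, which is the same device the paper uses in the proof of Corollary \ref{C1}(a) and is equally valid, provided one notes (as you do) that with infinitely many pairs processed sequentially the per-pair expected durations must be made summable, and that each component's path is eventually constant so continuity at $T_k$ is automatic. Two small remarks: the survivor of a pair sits at the parent value because the reflection coupling conserves the pair's sum, not ``by symmetry'' (symmetry only governs which of the two survives); and to invoke Proposition \ref{Prop:consistent} exactly as stated you should take the phase-two continuation of $\mathbf{M}^k$ after $T_k$ to be a copy of $\mathbf{M}^{k-1}$ rather than an arbitrary $\bp^{(k-1)}$-feasible process, though this is the same implicit convention the paper's own proof relies on.
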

For the proof it is convenient to  use Brownian-type process instead of 
Wright-Fisher. 
Write 
\[ Q(t) := \sum_i (M_i(t))^2 . \]
We will use constructions with the property
\[ \mbox{ At each time $0 \le t \le S$, at least one component} \]
\begin{equation}
 \mbox{ martingale $M_i(t)$ is evolving as Brownian motion } 
\label{Q-prop}
\end{equation}
for a specified stopping time $S$.  That is, our constructions can be  written as 
\[ dM_i(t) = \sigma_i(t) dW_i(t) \]
for (dependent) standard Brownian motions $W_i(t)$, and we require that 
some $\sigma_i(t)$ equals $1$.   
In general $Q(t) - \int_0^t\sum_i \sigma_i^2(s) \ ds$ is a martingale, so 
the advantage of  property  (\ref{Q-prop}) is that $Q(t) - t$ is a submartingale, implying
\begin{lemma}
\label{L:Qt}
Let $(M_i(t))$ be a $\bp$-feasible process satisfying (\ref{Q-prop}) for a stopping time $S$. 
Then 
$\Ex [S] \le \Ex[ Q(S)] - Q(0)$.
\end{lemma}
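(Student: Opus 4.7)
The plan is to view $Q(t)$ as the sum of the squares of the component martingales and to show, via It\^o's formula, that $Q(t)-t$ is a (local) submartingale, after which the desired inequality will follow from optional sampling.

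More concretely, writing $dM_i(t) = \sigma_i(t)\,dW_i(t)$, I would first apply It\^o's formula to each $M_i^2$ to obtain $d(M_i^2) = 2M_i\,dM_i + \sigma_i^2\,dt$. Summing over $i$ (the sum is finite in expected squared sense because $\sum_i M_i \equiv 1$ and $M_i \in [0,1]$ forces $\sum_i M_i^2 \le 1$) gives the decomposition
\[
Q(t) \;=\; Q(0) \;+\; N(t) \;+\; A(t), \qquad A(t) := \int_0^t \sum_i \sigma_i^2(s)\,ds,
\]
where $N(\cdot)$ is a continuous local martingale with $N(0)=0$. The crucial input from hypothesis (\ref{Q-prop}) is that for every $s \in [0,S]$ at least one $\sigma_i(s)$ equals $1$, and hence $\sum_i \sigma_i^2(s) \ge 1$. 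This forces $A(t) \ge t$ on $[0,S]$, so $Q(t) - t = Q(0) + N(t) + (A(t)-t)$ is a local martingale plus a nondecreasing process, i.e., a local submartingale.

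To finish, I would localize so that optional sampling genuinely applies. Since $Q$ is bounded in $[0,1]$, the stopping times $\tau_n := \inf\{t : A(t) > n\}$ make $N_{t\wedge\tau_n}$ a bounded (hence uniformly integrable) martingale; one has $\tau_n \uparrow \infty$ a.s.\ because each $M_i$ is a bounded continuous semimartingale with a.s.\ finite quadratic variation on bounded intervals. Optional sampling at the bounded stopping time $S \wedge n \wedge \tau_n$ yields $\Ex[N(S\wedge n\wedge\tau_n)] = 0$, i.e.,
\[
\Ex\!\bigl[A(S\wedge n\wedge\tau_n)\bigr] \;=\; \Ex\!\bigl[Q(S\wedge n\wedge\tau_n)\bigr] - Q(0) \;\le\; 1 - Q(0).
\]
Letting $n\to\infty$, monotone convergence handles the left-hand side and bounded convergence (using $Q\in[0,1]$ together with path continuity and $S<\infty$ a.s.) handles the right, giving $\Ex[A(S)] = \Ex[Q(S)] - Q(0)$. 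Since $A(S) \ge S$, the inequality $\Ex[S] \le \Ex[Q(S)] - Q(0)$ follows.

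The main subtlety I anticipate is precisely the passage from local to true martingale for $N$: a priori $A(S)$ could be infinite, so the localization by $\tau_n$ together with the boundedness of $Q$ is what keeps both sides of the optional sampling identity integrable and allows the final limit; the rest of the argument is mechanical It\^o calculus and an application of (\ref{Q-prop}).
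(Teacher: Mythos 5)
Your proposal is correct and follows essentially the same route as the paper, which simply observes that $Q(t)-\int_0^t\sum_i\sigma_i^2(s)\,ds$ is a martingale, so that by (\ref{Q-prop}) $Q(t)-t$ is a submartingale, and then applies optional sampling. Your write-up merely fills in the localization and convergence details that the paper leaves implicit.
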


A simple construction satisfying  (\ref{Q-prop}) is the {\em Brownian reflection coupling} of two component martingales. 
That is, on $0 \le t \le S$ we freeze components other than $i,j$, and set 
\[ M_i(t) - M_i(0) = W_i(t), \quad M_j(t) - M_j(0) = - W_i(t) .\]

\begin{lemma}
\label{L:012}
Let $I_0$ be countable, and $I_1$ and $I_2$ be finite, index sets.
Let $(p_i, i \in I_0 \cup I_1)$ and $(q_i, i \in I_0 \cup I_2)$ be probability distributions which coincide on $I_0$ and satisfy 
$\max_{i \in I_1} p_i \le \min_{i \in I_2} q_i$. 
Then there exists a $\bp$-feasible process $\{M_\alpha(\cdot)\}$ satisfying (\ref{Q-prop}) 
such that for some stopping time $S$ we have 
$\rank( \ \{M_\alpha(S)\} \ ) = \rank(\bq)$.
\end{lemma}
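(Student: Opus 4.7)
My plan is to freeze the $I_0$-components throughout $[0,S]$ and carry out the entire redistribution among the $I_1$-components. Writing $\bp'=(p_i)_{i\in I_1}$ and $\bq'=(q_j)_{j\in I_2}$, these have equal sums and satisfy $\max\bp'\le\min\bq'$, and since $\bp$ and $\bq$ coincide on $I_0$ it suffices to arrange that the ranked active values at time $S$ equal $\rank(\bq')$. After time $S$ I would append an arbitrary $\bp$-feasible continuation (say, a rescaled Wright--Fisher diffusion run to fixation) in order to fulfil the ``unique winner'' clause of $\bp$-feasibility; property (\ref{Q-prop}) is only required on $[0,S]$.

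The basic move is a \emph{Brownian reflection coupling} of two currently active components at values $(x,y)$ with all others frozen: this deterministically produces $(a,\,x+y-a)$ for any chosen $a\in[0,x+y]$, in finite expected time, with both of the coupled components evolving as standard Brownian motions so that (\ref{Q-prop}) is in force. I will induct on $|I_2|$. Order $\bp'$ decreasingly and set $q^*=\min\bq'$. If $p_1=q^*$, freeze that component and recurse. Otherwise $p_1<q^*$ and, since $\sum\bp'=\sum\bq'\ge q^*$, there is a smallest $k\ge 2$ with $p_1+\cdots+p_k\ge q^*$. I will first merge $p_1,\ldots,p_{k-1}$ into a single accumulator of value $A:=p_1+\cdots+p_{k-1}<q^*$ using $k-2$ pairwise reflection couplings (each run until one partner hits $0$); then reflection-couple the accumulator with $p_k$ and stop at the deterministic moment the accumulator reaches $q^*$. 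This stopping is admissible because $q^*-A\le p_k$ by the choice of $k$, and leaves the partner at $A+p_k-q^*\ge 0$. Freeze the accumulator as the new $q^*$-component and recurse.

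The crux is verifying that the residual list $\bp'':=(A+p_k-q^*,\,p_{k+1},\ldots,p_m)$ still satisfies $\max\bp''\le\min(\bq'\setminus\{q^*\})$ so that the induction closes. The minimality of $k$ forces $A<q^*$, hence $A+p_k-q^*<p_k\le p_1\le q^*$, and the remaining entries satisfy $p_j\le p_1\le q^*$; so every entry of $\bp''$ is $\le q^*\le\min(\bq'\setminus\{q^*\})$. The base case $|I_2|=1$ is a single cascade of merges collapsing $\bp'$ into one component of value $\sum\bp'=q_1$, and the trivial case $|I_2|=0$ forces $\bp'=\emptyset$ so $S=0$. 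Since the construction consists of finitely many phases, each of finite expected duration (by Lemma \ref{L:Qt}, or directly from Brownian hitting-time bounds), $S<\infty$ a.s.

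The main (and essentially only) obstacle is this closing of the induction; the entire strength of the hypothesis $\max\bp'\le\min\bq'$ enters via the minimality of $k$ in the inequality $A+p_k-q^*<p_k\le q^*$. Everything else --- time finiteness, property (\ref{Q-prop}), and completing to a $\bp$-feasible process after $S$ --- is routine.
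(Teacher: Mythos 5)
Your construction is correct in substance and rests on the same basic tool as the paper's proof --- a Brownian reflection coupling of two unfrozen components with everything else frozen, stopped at a prescribed level --- but it organizes the combinatorics differently. The paper uses a single greedy rule: pick any two indices in $I_1$, run the reflection coupling until one of the pair hits $0$ or $\min_{i\in I_2}q_i$, freeze or delete accordingly, and observe that $|I_1|+|I_2|$ strictly decreases while the invariants $\sum_{I_1}p_i=\sum_{I_2}q_i$ and $\max_{I_1}p_i\le\min_{I_2}q_i$ are preserved; the outcome of each stage is random (which barrier is hit first), but the terminal ranked configuration is forced to be $\rank(\bq)$. Your accumulate-and-top-up scheme instead manufactures the target values one at a time from $q^*=\min\bq'$ upward, with the extra feature that the ranked configuration after every stage is deterministic; your induction-closing inequality $A+p_k-q^*<p_k\le q^*\le\min(\bq'\setminus\{q^*\})$ is verified correctly, and your explicit appending of a Wright--Fisher continuation after $S$ addresses the fixation requirement, which the paper leaves implicit.

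One point needs repair. In the top-up step you cannot ``stop at the deterministic moment the accumulator reaches $q^*$'': the accumulator, started at $A<q^*$, may hit $0$ first (this happens with probability $(q^*-A)/q^*$), and it cannot be continued below $0$. The correct stopping rule is the first time \emph{either} of the two coupled components reaches $q^*$; since both start in $[0,q^*]$ and their sum $A+p_k$ is at least $q^*$, one of them reaches $q^*$ strictly before either can reach $0$, and the ranked outcome is $\{q^*,\ A+p_k-q^*\}$ on both branches. You then freeze whichever component sits at $q^*$ --- its identity is random, but the lemma only concerns ranked values --- and the rest of your argument goes through unchanged. The same caveat applies to your general claim that a reflection coupling from $(x,y)$ ``deterministically produces $(a,x+y-a)$'': this is true of the ranked pair (stop when either component first reaches $a$), not of which labelled component lands where.
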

\begin{proof} 
Freeze permanently the component martingales with $i \in I_0$.
Pick two arbitrary indices $i^\prime, i^*$ in $I_1$ and run the Brownian reflection coupling on these two components $M_{i^\prime}(t),M_{i^*}(t)$
until one component hits zero or $\min_{i \in I_2} q_i$.  
In the latter case, freeze that component permanently and delete its index from $I_1$ and delete $\arg \min_{i \in I_2} q_i$ from $I_2$.  In the former case, only delete the index from $I_1$. 
The total number (originally $|I_1| + |I_2|$) 
of unfrozen components is now decreased by at least $1$. 
Continue inductively, picking two  components from $I_1$ at each stage. 
Eventually all components are frozen and the ranked state is $\rank(\bq)$.
\end{proof}
\begin{proof}[Proof of Proposition \ref{P-embed}]
Define $\bp^0 = \bp$ and for $k \geq 1$ construct $\bp^k$ from $\bp$
by \\
(i) retaining entries $p_i$ with $p_i \le 2^{-k}$; \\
(ii)  replacing other $p_i$ by $2^{j(i)}$ copies of $2^{-j(i)}p_i$, 
where $j(i) \ge 1$ is the smallest integer such that $2^{-j(i)}p_i \le 2^{-k}$.

\noindent
Each pair $(\bp^k ,\bp^{k-1})$ satisfies the hypothesis of Lemma \ref{L:012}.  So for each $k$, writting $\mu_k = \delta_{\bp^k}$ and writing
$\bM^k$ and $T_k$ for the $\bp^k$-feasible process and the stopping time given by Lemma \ref{L:012}, 
we see that hypotheses (i)-(iii) of Proposition \ref{Prop:consistent} 
are satisfied.  
Moreover by Lemma \ref{L:Qt} we have 
$\Ex [T_k] \le q_{k-1} - q_k$ for 
$q_k:= \sum_i (p^k_i)^2$, implying that  hypotheses (i)-(iii) 
are also satisfied. 
The conclusion of Proposition \ref{Prop:consistent} 
now establishes  Proposition \ref{P-embed}. 
\end{proof}

\section{The $\bzero$-Wright-Fisher process}
\label{sec:0WF}
Write $\Delta$ for the (unranked) infinite simplex 
$\{(p_i, 1 \le i < \infty):\  p_i \ge 0, \sum_i p_i = 1\}$. 
As mentioned in section \ref{sec:multi}, 
for each $\bp \in \Delta$ there exists the $\bp$-Wright-Fisher process, 
a process with sample paths in 
$C([0,\infty), \Delta)$ and initial state $\bp$, which is
the infinite-dimensional diffusion with generator analogous to 
(\ref{WF-generator}) starting from state $\bp$, and that this is a 
$\bp$-feasible process.  
This has a straightforward construction: given $\bp \in \Delta$, 
set $\bp^n = (p_1,\ldots,p_{n-1}, \sum_{m \ge n} p_m)$, so the 
$\bp^n$-process exists as a finite-dimensional diffusion. 
But there is a natural coupling between the $\bp^{n-1}$- 
and the $\bp^n$-processes in which the first $n-2$ coordinate processes coincide, and appealing to Kolmogorov consistency for the infinite sequences of processes we immediately obtain the $\bp$-process.

Intuitively, we want to think of the $\bzero$-Wright-Fisher process as a suitable limit 
of the $(1/n,1/n,\ldots,1/n)$-Wright-Fisher processes as $n \to \infty$. 
But in fact the limit in distribution, in the compactified space
$\overline{\Delta} = \{(p_i, 1 \le i < \infty):\  p_i \ge 0, \sum_i p_i \le 1\}$,
is the process which is identically 
$(0,0,0,\ldots)$. 
The foundational 1981 paper of Ethier and Kurtz \cite{MR615945} shows that a non-trivial limit $\bX(t) = (X_i(t), i \ge 1)$ starting from $(0,0,0,\ldots)$ does exist if we work in the ranked infinite simplex $\nabla$; more precisely 
the limit process has sample paths in $C([0,\infty), \overline{\nabla})$ for 
 the compactifed ranked simplex $\overline{\nabla}$, but for $t>0$ takes values in $\nabla$.

That process is in some senses the process we want, but that formalization 
does not suffice for our purposes because it does not preserve the identity 
of components  as $t$ varies. 
That is,  we want the $\bzero$-feasible process $\{M_\alpha(t)\}$ 
whose components are martingales and for which 
\begin{equation}
\mbox{ $\bX(t) = \rank(\{M_\alpha(t)\})$ with a separate ranking for each $t$.} 
\label{bzz}
\end{equation}
The component processes $X_i(\cdot)$ are not martingales and  we cannot define 
quantities like $N_b$ and $D_{ab}$ in terms of $\bX$.  
Note that by Lemma \ref{L:naive} we cannot represent 
$\bX(t)$ as $\rank(\bM(t))$ for any process in $C([0,\infty), \overline{\Delta})$ 
with martingale components.

 Fortunately we can fit the $\bzero$-Wright-Fisher process into our abstract set-up by combining the existence of the process $\bX(t)$ with our
Proposition \ref{Prop:consistent}.  
Take times $s_k \downarrow 0$ and let $\mu_k$ be the distribution of 
$\bX(s_k)$.  
Then there is a $\oplus$-feasible Wright-Fisher process $\bM^k$ with initial distribution 
$\mu_k$, and existence of the ranked Wright-Fisher process $\bX$ implies that consistency condition (iii) of Proposition \ref{Prop:consistent} 
holds with $T_k = s_{k-1} - s_k$, and the conclusion of that proposition 
is that a $\bzero$-feasible process satisfying (\ref{bzz}) exists.

\subsection{Distributions associated with the $\bzero$-Wright-Fisher process}
\begin{prb}
\label{pr-1}
What are the distributions of $N_b$ and $D_{ab}$ for the $\bzero$-Wright-Fisher process?
\end{prb}
We remark that, if one only wanted to compute $\var(N_b)$, it would be sufficient to determine the limiting behavior of the quantity 
\begin{equation}\label{cov3}
\Pr(\sup_t M_1(t)\geq b,\,\sup_t M_2(t)\geq b|\,M_1(0)=x,\,M_2(0)=y) 
\end{equation}
in the limit $x,y\downarrow0$, where $M_1$, $M_2$ are the first two components of a $3$-allele Wright-Fisher diffusion. We also note that the quantity \eqref{cov3} coincides with the classical solution of the PDE $\frac{1}{2}x(1-x)f_{xx}+\frac{1}{2}y(1-y)f_{yy}-xy\,f_{xy}=0$ on $[0,b]\times[0,b]$ with the boundary conditions $f(x,0)=f(0,y)=0$, $f(x,b)=x/b$, $f(b,y)=y/b$, provided that such a solution exists. We were not able to solve the PDE explicitly, so that even the question of finding $\var(N_b)$ is an open problem. 

\section{Final remarks and open problems}\label{sec:OPext}
We have already stated open problem \ref{pr-1} and
Conjecture \ref{conj_1}.
The discussion at the start of section \ref{sec:small_spread} 
concerning constructions where $D_{ab}$ has small spread suggests 
the following closely analogous question concerning Brownian motions.
\begin{prb}
For each $1 \le i \le k$ let $(B_i(t), 0 \le t)$ be standard Brownian motion w.r.t. the same filtration, killed upon first hitting $-1$, and let $L_i$ be the 
total local time of $B_i(\cdot)$ at $0$.  
How small can the ratio 
$\var [\sum_{i=1}^k L_i] / \var [L_1]$ be?
\end{prb} 
We do not know any relevant work, though Jim Pitman (personal communication) observes that for $k=2$ one can indeed have negative correlation between $L_1$ and $L_2$.

\bibliographystyle{amsplain}
\bibliography{MGs}

\end{document}